\newtheorem{theorem}{Theorem}[section]
\newtheorem{proposition}[theorem]{Proposition}
\newtheorem{Definition}[theorem]{Definition}
\newtheorem{Example}[theorem]{Example}
\newtheorem{Remark}[theorem]{Remark}
\newenvironment{remark}{\begin{Remark}\begin{em}}{\end{em}\end{Remark}}
\newenvironment{definition}{\begin{Definition}\begin{em}}{\end{em}\end{Definition}}
\address{Sejong Kim \\ Department of Mathematics, Chungbuk National University, Cheongju 361-763, Korea}
\email{skim@chungbuk.ac.kr}
\begin{document}

\author{Sejong Kim}

\title[Mixture and interpolation of the parameterized ordered means]{Mixture and interpolation of the parameterized ordered means}

\date{}
\maketitle

\begin{abstract}
Loewner partial order plays a very important role in metric topology and operator inequality on the open convex cone of positive invertible operators. In this paper we consider a family $G = \{ G_{n} \}_{n \in \mathbb{N}}$ of the ordered means for positive invertible operators equipped with homogeneity and properties related to the Loewner partial order such as the monotonicity, joint concavity, and arithmetic-$G$-harmonic weighted mean inequalities. Similar to the resolvent average, we construct a parameterized ordered mean and compare two types of the mixture of parameterized ordered means in terms of the Loewner order. We also show the relation between two families of parameterized ordered means associated with the power mean, monotonically interpolating given two parameterized ordered means.

\vspace{5mm}

\noindent {\bf Mathematics Subject Classification} (2010): Primary 47B65, Secondary 15B48

\noindent {\bf Keywords}: ordered mean, parameterized ordered mean, mixture, interpolation, power mean, Kantorovich constant
\end{abstract}

\section{Introduction}

As the proximal average of proper convex lower semi-continuous functions in the context of convex analysis and optimization, the weighted resolvent mean which is a parameterized harmonic mean has been introduced \cite{BMW}:
\begin{displaymath}
\mathcal{R}^{\mu} (\omega; \mathbf{A}) := \left[ \sum_{i=1}^{n} w_{i} (A_{i} + \mu I)^{-1} \right]^{-1} - \mu I, \ \mu \geq 0,
\end{displaymath}
where $\omega = (w_{1}, \dots, w_{n})$ is a positive probability vector in $\mathbb{R}^{n}$ and $\mathbf{A} = (A_{1}, \dots, A_{n})$ is an $n$-tuple of positive definite Hermitian matrices. As a symmetrized version of the weighted resolvent mean and a unique minimizer of the weighted sum of Kullback-Leibler divergence, a parameterized weighted arithmetic-geometric-harmonic mean (simply call it the weighted $\mathcal{A} \# \mathcal{H}$ mean) has been introduced \cite{KLL}:
\begin{displaymath}
\mathcal{L}^{\mu} (\omega; \mathbf{A}) := \left[ \sum_{i=1}^{n} w_{i} (A_{i} + \mu I) \right] \# \left[ \sum_{i=1}^{n} w_{i} (A_{i} + \mu I)^{-1} \right]^{-1} - \mu I, \ \mu \geq 0,
\end{displaymath}
where $A \# B$ is the midpoint of Riemannian geodesic $A \#_{p} B = A^{1/2} (A^{-1/2} B A^{-1/2})^{p} A^{1/2}, p \in [0,1]$ of positive definite Hermitian matrices $A$ and $B$ for the Riemannian trace metric $\delta(A, B) = \Vert \log A^{-1/2} B A^{-1/2} \Vert_{2}$. The interesting results of these means are that they interpolate the weighted harmonic mean and arithmetic mean, the weighted $\mathcal{A} \# \mathcal{H}$ mean is the limit of the mean iteration of two-variable arithmetic mean and resolvent mean, and they satisfy the monotonicity for parameter $\mu$ and the non-expansiveness for the Thompson part metric $d_{T}(A, B) = \Vert \log A^{-1/2} B A^{-1/2} \Vert$, where $\Vert \cdot \Vert$ denotes the operator norm. Recently, the generalization of parameterized version of weighted means including the Cartan mean, which is the unique minimizer of the weighted sum of Riemannian trace distances to given variables, to contractive barycentric maps of probability measures has been developed \cite{Lim}.

On the open convex cone of positive invertible (positive definite) bounded linear operators as the infinite-dimensional setting, we consider a family $G = \{ G_{n} \}$ of the $n$-variable weighted means equipped with homogeneity and properties related to the Loewner partial order for each $n \in \mathbb{N}$, that is, the monotonicity, joint concavity, and arithmetic-$G$-harmonic mean inequalities. It includes many multivariate means such as the resolvent mean, power mean, Karcher mean \cite{LL14}, and we call it the \emph{ordered mean}. Similar to the weighted resolvent mean and the weighted $\mathcal{A} \# \mathcal{H}$ mean, we construct the parameterized ordered mean from given ordered mean $G$:
\begin{displaymath}
G^{\mu} (\omega; \mathbf{A}) := G^{\mu} (\omega; A_{1} + \mu I, \dots, A_{n} + \mu I) - \mu I, \ \mu \geq 0,
\end{displaymath}
and $G^{\mu} (\omega; \mathbf{A}) := G^{-\mu} (\omega; \mathbf{A}^{-1})^{-1}$ for $\mu < 0$, where $\mathbf{A}^{-1} := (A_{1}^{-1}, \dots, A_{n}^{-1})$. We first in Section 3 investigate properties of the parameterized ordered mean additionally to those in \cite{Lim}, and then compare two mixed means of parameterized ordered means: for the $n$-by-$k$ block matrix $\mathbb{A} = [ A_{ij} ]$ whose block entries are positive definite operators, and a positive probability vector $\lambda \in \mathbb{R}^{k}$,
\begin{displaymath}
G_{n}^{\nu}(\omega; G_{k}^{\mu_{1}}(\lambda; \mathbb{A}^{1}), \dots, G_{k}^{\mu_{n}}(\lambda; \mathbb{A}^{n})) \ \textrm{ and } \ G_{k}^{\sum \omega_{i} \mu_{i}} (\lambda; G_{n}^{\nu}(\omega; \mathbb{A}_{1}), \dots, G_{n}^{\nu}(\omega; \mathbb{A}_{k})),
\end{displaymath}
where $\mathbb{A}^{i}$ and $\mathbb{A}_{j}$ denote the tuples of the $i$th row and $j$th column of $\mathbb{A}$. They coincides when the variables $A_{ij}$ commute, but it does not hold in general. We obtain interesting inequalities associated with the Kantorovich constant.

Furthermore, we consider in Section 4 two families of parameterized ordered means
\begin{displaymath}
\{ G^{P_{p} (1-t, t; \mu, \nu)} (\omega; \mathbf{A}) \}_{t \in [0,1]} \ \textrm{ and } \ \{ P_{p} (1-t, t; G^{\mu} (\omega; \mathbf{A}), G^{\nu} (\omega; \mathbf{A})) \}_{t \in [0,1]}
\end{displaymath}
for given parameters $\mu, \nu > 0$ and any $p \in [0,1]$. Note that $P_{p} (\omega; \mathbf{A})$ is the weighted power mean of positive definite operators, which is the unique positive definite solution $X$ of the nonlinear equation $X = \sum_{i=1}^{n} w_{i} X \#_{p} A_{i}$. The interesting fact of these families is that they interpolate monotonically two parameterized ordered means $G^{\mu} (\omega; \mathbf{A})$ and $G^{\nu} (\omega; \mathbf{A})$, due to the monotonicities of parameterized ordered means on parameters and power means on variables. We show their relation with respect to the Loewner order and provide a generalization to the multivariate power means, so that we obtain the interesting chain of inequalities for positive parameters. Finally, we give in Section 5 two open problems about the interpolation of parameterized ordered means for the generalized means (H\"{o}lder means) instead of the power means and the contractive barycentric maps of probability measures with compact support.

\section{Ordered means}

Let $B(\mathcal{H})$ be the Banach space of all bounded linear operators on a Hilbert space $\mathcal{H}$ with inner product $\langle \cdot, \cdot \rangle$, and let $S(\mathcal{H}) \subset B(\mathcal{H})$ be the real vector space of all self-adjoint operators. We call $A \in S(\mathcal{H})$ positive semi-definite (positive definite) if $\langle x, Ax \rangle \geq (>) 0$ for all (nonzero, respectively) vector $x \in \mathcal{H}$. We denote as $\mathbb{P} \subset S(\mathcal{H})$ the open convex cone of all positive definite operators. For self-adjoint operators $A, B$ we write $A \leq (<) B$ if and only if $B - A$ is positive semi-definite (positive definite, respectively). This is known as the Loewner partial order.

Since Kubo and Ando \cite{KA} established two-variable means of positive definite matrices and operators, many different kinds of construction schemes of $n$-variable means have been developed. Especially, Ando, Li, and Mathias \cite{ALM} suggested ten desired properties for extended geometric means. We consider a family of the weighted means of positive definite operators with homogeneity and properties only related with the Loewner order, and call it the ordered mean. In the following $\Delta_{n}$ is the simplex of positive probability vectors in $\mathbb{R}^{n}$ convexly spanned by the unit coordinate vectors.

\begin{definition}
The \emph{ordered mean} is a family $G = \{ G_{n} \}_{n \in \mathbb{N}}$ such that for each $n$, a map $G_{n}: \Delta_{n} \times \mathbb{P}^{n} \to \mathbb{P}$ satisfies the following properties: for $\mathbf{A} = (A_{1}, \dots, A_{n}), \ \mathbf{B} = (B_{1}, \dots, B_{n}) \in \mathbb{P}^{n}, \ \omega = (w_{1}, \dots, w_{n}) \in \Delta_{n}$, and a positive real number $a$,
\begin{itemize}
\item[(P1)] (Homogeneity) $\displaystyle G_{n}(\omega; a \mathbf{A}) = a G_{n}(\omega; \mathbf{A})$;
\item[(P2)] (Monotonicity) $\displaystyle G_{n}(\omega; \mathbf{B}) \leq G_{n}(\omega; \mathbf{A})$ whenever $B_{i} \leq A_{i}$ for all $1 \leq i \leq n$;
\item[(P3)] (Joint concavity) $\displaystyle G_{n}(\omega; (1 - s) \mathbf{A} + s \mathbf{B}) \geq (1 - s) G_{n}(\omega; \mathbf{A}) + s G_{n}(\omega; \mathbf{B})$ for $0 \leq s \leq 1$;
\item[(P4)] (Arithmetic-$G$-harmonic weighted mean inequalities)
\begin{displaymath}
\mathcal{H}(\omega; \mathbf{A}) := \left[ \sum_{i=1}^{n} w_{i} A_{i}^{-1} \right]^{-1} \leq G_{n}(\omega; \mathbf{A}) \leq \sum_{i=1}^{n} w_{i} A_{i} =: \mathcal{A}(\omega; \mathbf{A}).
\end{displaymath}
\end{itemize}
\end{definition}

By the arithmetic-$G$-harmonic weighted mean inequalities (P4), one can see that the ordered mean $G$ is idempotent, that is, $G_{n}(\omega; A, \dots, A) = A$ for any $A \in \mathbb{P}$ and all $n \in \mathbb{N}$.

\begin{remark}
Many multivariate means of positive definite matrices and operators, including the Ando-Li-Mathias mean \cite{ALM}, Bini-Meini-Poloni mean \cite{BMP}, resolvent average \cite{BMW}, arithmetic-geometric-harmonic mean \cite{KLL}, power mean \cite{LP}, and Karcher mean \cite{LL14}, fulfill the definition of ordered means. Moreover, every ordered mean $G = \{ G_{n} \}_{n \in \mathbb{N}}$ is the Lie-Trotter mean \cite{HK} since it satisfies the arithmetic-$G$-harmonic weighted mean inequalities. That is, for each $n$
\begin{displaymath}
\lim_{s \to 0} G_{n}(\omega; \gamma_{1}(s), \dots, \gamma_{n}(s))^{1/s} = \exp \left[ \sum_{i=1}^{n} w_{j} \gamma_{i}'(0) \right],
\end{displaymath}
where $\omega \in \Delta_{n}$ and $\gamma_{1}, \dots, \gamma_{n}$ are differentiable curves on $\mathbb{P}$ with $\gamma_{i}(0) = I$ for all $i$.
\end{remark}

\begin{remark}
From \cite[Proposition 2.3]{KumL} the ordered mean $G$ is non-expansive for the Thompson metric $d_{T}$. In other words, let $\mathbf{A} = (A_{1}, \dots, A_{n}), \ \mathbf{B} = (B_{1}, \dots, B_{n}) \in \mathbb{P}^{n}$, and $\omega = (w_{1}, \dots, w_{n}) \in \Delta_{n}$. For each $n \in \mathbb{N}$
\begin{displaymath}
d_{T} (G_{n}(\omega; \mathbf{A}), G_{n}(\omega; \mathbf{B})) \leq \underset{1 \leq i \leq n}{\max} d_{T} (A_{i}, B_{i}),
\end{displaymath}
where $d_{T} (A, B) = \Vert \log A^{-1/2} B A^{-1/2} \Vert$ for any $A, B \in \mathbb{P}$ and the operator norm $\Vert \cdot \Vert$. This provides a generalization to the contractive barycentric map of probability measures \cite{LL17, Lim}, as well as continuity of the ordered mean $G_{n}$.
\end{remark}

Let
\begin{displaymath}
\mathbb{A} = [ A_{ij} ] =
\left(
  \begin{array}{cccc}
    A_{11} & A_{12} & \cdots & A_{1k} \\
    A_{21} & A_{22} & \cdots & A_{2k} \\
    \vdots & \vdots & \ddots & \vdots \\
    A_{n1} & A_{n2} & \cdots & A_{nk} \\
  \end{array}
\right)
\end{displaymath}
be an $n$-by-$k$ block matrix whose $(i,j)$ block entry is $A_{ij} \in \mathbb{P}$. We denote as $\mathbb{A}^{i} := (A_{i1}, A_{i2}, \dots, A_{ik}) \in \mathbb{P}^{k}$ and $\mathbb{A}_{j} := (A_{1j}, A_{2j}, \dots, A_{nj}) \in \mathbb{P}^{n}$, respectively, the tuples of the $i$th row and $j$th column of $\mathbb{A}$. Also, we simply write as $A_{1} \oplus A_{2} \oplus \cdots \oplus A_{n}$ the $n$-by-$n$ block diagonal matrix whose $(i,i)$ block entry is $A_{i} \in \mathbb{P}$.

Given $\omega = (w_{1}, \dots, w_{n}) \in \Delta_{n}$, let
\begin{displaymath}
\Phi(\mathbb{A}) = \sum_{i=1}^{n} w_{i} A_{ii}
\end{displaymath}
for an $n$-by-$n$ block matrix $\mathbb{A} = [ A_{ij} ]$. Then it is strictly positive and unital linear map. Assume that $0 < m I \leq A_{i} \leq M I$ for all $i = 1, \dots, n$, where $M, m > 0$ are some constants. Applying \cite[Proposition 2.7.8]{Bh} to $\Phi$ with $A_{1} \oplus A_{2} \oplus \cdots \oplus A_{n}$, we obtain the reverse inequality of arithmetic-harmonic weighted mean inequality:
\begin{displaymath}
\sum_{i=1}^{n} w_{i} A_{i} \leq \frac{(M + m)^{2}}{4 M m} \left[ \sum_{i=1}^{n} w_{i} A_{i}^{-1} \right]^{-1}.
\end{displaymath}
Here, the value $\displaystyle K = \frac{(M + m)^{2}}{4 M m}$ is known as the Kantorovich constant. By the $G$-harmonic weighted mean inequality in (P4),
\begin{equation} \label{E:reverse-A-G}
\sum_{i=1}^{n} w_{i} A_{i} \leq \frac{(M + m)^{2}}{4 M m} G_{n}(\omega; A_{1}, \dots, A_{n}).
\end{equation}

For each $n$ consider a multivariate geometric mean $G_{n}$ satisfying the consistency with scalars, that is,
\begin{displaymath}
G_{n}(\omega; \mathbf{A}) = \prod_{i=1}^{n} A_{i}^{w_{i}}
\end{displaymath}
when the $A_{i}$'s commute, where $\mathbf{A} = (A_{1}, \dots, A_{n}) \in \mathbb{P}^{n}$. Then the following holds:
\begin{displaymath}
G_{n}(\omega; G_{k}(\lambda; \mathbb{A}^{1}), \dots, G_{k}(\lambda; \mathbb{A}^{n})) = \prod_{i, j} A_{ij}^{w_{i} \lambda_{j}} = G_{k}(\lambda; G_{n}(\omega; \mathbb{A}_{1}), \dots, G_{n}(\omega; \mathbb{A}_{k}))
\end{displaymath}
when the $A_{ij}$'s commute, where $\lambda = (\lambda_{1}, \dots, \lambda_{k}) \in \Delta_{k}$ and $\mathbb{A} = [ A_{ij} ]$ is the $n$-by-$k$ block matrix with $A_{ij} \in \mathbb{P}$ for all $i,j$. Although it does not hold in general, we have the following inequality.

\begin{theorem} \label{T:Kantorovich1}
Let $\mathbb{A} = [ A_{ij} ]$ be the $n$-by-$k$ block matrix, where $A_{ij} \in \mathbb{P}$ for all $i,j$. Assume that $0 < m I \leq A_{ij} \leq M I$ for all $i, j$, where $M, m > 0$ are some constants. Then the ordered mean $G = \{ G_{n} \}_{n \in \mathbb{N}}$ satisfies that for any $\lambda \in \Delta_{k}$ and $\omega \in \Delta_{n}$
\begin{displaymath}
G_{n}(\omega; G_{k}(\lambda; \mathbb{A}^{1}), \dots, G_{k}(\lambda; \mathbb{A}^{n})) \leq K G_{k}(\lambda; G_{n}(\omega; \mathbb{A}_{1}), \dots, G_{n}(\omega; \mathbb{A}_{k})),
\end{displaymath}
where $\displaystyle K = \frac{(M + m)^{2}}{4 M m}$.
\end{theorem}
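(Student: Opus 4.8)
The plan is to bound the left-hand side from above by a short chain of inequalities: peel off the outer mean with the arithmetic-$G$ estimate of (P4), push the resulting arithmetic average \emph{inside} $G_{k}$ using joint concavity (P3), and then replace the arithmetic means of the columns by $K$ times the corresponding ordered means via the Kantorovich-type reverse inequality \eqref{E:reverse-A-G}, cleaning up with homogeneity (P1) and monotonicity (P2).

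Concretely, first I would apply the upper bound in (P4) to the outer $G_{n}$, with inner entries $G_{k}(\lambda;\mathbb{A}^{i})$, to get
\[
G_{n}(\omega; G_{k}(\lambda; \mathbb{A}^{1}), \dots, G_{k}(\lambda; \mathbb{A}^{n})) \leq \sum_{i=1}^{n} w_{i}\, G_{k}(\lambda; \mathbb{A}^{i}).
\]
Next I would record that (P3), applied inductively, upgrades to the $n$-fold Jensen inequality $G_{k}(\lambda; \sum_{i} w_{i} \mathbf{C}^{i}) \geq \sum_{i} w_{i} G_{k}(\lambda; \mathbf{C}^{i})$ for arbitrary $\mathbf{C}^{1}, \dots, \mathbf{C}^{n} \in \mathbb{P}^{k}$ and $\omega \in \Delta_{n}$. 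Taking $\mathbf{C}^{i} = \mathbb{A}^{i}$ and observing that $\sum_{i} w_{i} \mathbb{A}^{i} = (\mathcal{A}(\omega; \mathbb{A}_{1}), \dots, \mathcal{A}(\omega; \mathbb{A}_{k}))$, since the $j$th coordinate of $\sum_{i} w_{i}\mathbb{A}^{i}$ is $\sum_{i} w_{i} A_{ij} = \mathcal{A}(\omega; \mathbb{A}_{j})$, this yields
\[
\sum_{i=1}^{n} w_{i}\, G_{k}(\lambda; \mathbb{A}^{i}) \leq G_{k}(\lambda; \mathcal{A}(\omega; \mathbb{A}_{1}), \dots, \mathcal{A}(\omega; \mathbb{A}_{k})).
\]
Finally, since $mI \leq A_{ij} \leq MI$ for all $i,j$, inequality \eqref{E:reverse-A-G} applies to each column tuple $\mathbb{A}_{j}$ and gives $\mathcal{A}(\omega; \mathbb{A}_{j}) \leq K\, G_{n}(\omega; \mathbb{A}_{j})$; feeding this in through (P2) and extracting $K$ by (P1),
\[
G_{k}(\lambda; \mathcal{A}(\omega; \mathbb{A}_{1}), \dots, \mathcal{A}(\omega; \mathbb{A}_{k})) \leq G_{k}(\lambda; K G_{n}(\omega; \mathbb{A}_{1}), \dots, K G_{n}(\omega; \mathbb{A}_{k})) = K\, G_{k}(\lambda; G_{n}(\omega; \mathbb{A}_{1}), \dots, G_{n}(\omega; \mathbb{A}_{k})).
\]
Concatenating the three displays proves the claim.

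I do not expect a deep obstacle here; the one step that genuinely has to be right — and the reason the argument closes with a single factor $K$ rather than $K^{2}$ — is the use of joint concavity (P3) to move the outer arithmetic average inside $G_{k}$. The naive alternative (bound each $G_{k}(\lambda;\mathbb{A}^{i})$ by $\mathcal{A}(\lambda;\mathbb{A}^{i})$ first, then reassemble) produces the wrong nesting order and forces two applications of \eqref{E:reverse-A-G}. I would also take care to spell out the elementary induction promoting (P3) from two arguments to $n$, and to emphasize that it is the \emph{rows} $\mathbb{A}^{i}$ to which (P3) is applied, with the weights $\omega$.
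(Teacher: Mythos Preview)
Your proposal is correct and follows exactly the paper's own proof: the same three-step chain using (P4) for the outer $G_{n}$, (P3) to pull the convex combination inside $G_{k}$, and then \eqref{E:reverse-A-G} columnwise together with (P2) and (P1). The only addition is your explicit remark that (P3) must be promoted to $n$ terms by induction, which the paper leaves implicit.
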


\begin{proof}
Let $\omega = (w_{1}, \dots, w_{n}) \in \Delta_{n}$. Then
\begin{displaymath}
\begin{split}
G_{n}(\omega; G_{k}(\lambda; \mathbb{A}^{1}), \dots, G_{k}(\lambda; \mathbb{A}^{n}))
& \leq \sum_{i=1}^{n} w_{i} G_{k}(\lambda; \mathbb{A}^{i}) \\
& \leq G_{k} \left( \lambda; \sum_{i=1}^{n} w_{i} \mathbb{A}^{i} \right) = G_{k} \left( \lambda; \sum_{i=1}^{n} w_{i} A_{i1}, \dots, \sum_{i=1}^{n} w_{i} A_{ik} \right) \\
& \leq G_{k} (\lambda; K G_{n}(\omega; \mathbb{A}_{1}), \dots, K G_{n}(\omega; \mathbb{A}_{k})) \\
& = K G_{k}(\lambda; G_{n}(\omega; \mathbb{A}_{1}), \dots, G_{n}(\omega; \mathbb{A}_{k})).
\end{split}
\end{displaymath}
The first inequality follows from the arithmetic-$G$ weighted mean inequality in (P4), the second from the joint concavity (P3), the third from \eqref{E:reverse-A-G} together with the monotonicity (P2), and the last equality from the homogeneity (P1).
\end{proof}

\begin{theorem} \label{T:Kantorovich2}
Let $\mathbb{A} = [ A_{ij} ]$ be the $n$-by-$k$ block matrix, where $A_{ij} \in \mathbb{P}$ for all $i,j$. Assume that $0 < m I \leq A_{ij} \leq M I$ for all $i, j$, where $M, m > 0$ are some constants. Then
\begin{displaymath}
G_{n}(\omega; G_{k}(\lambda; \mathbb{A}^{1}), \dots, G_{k}(\lambda; \mathbb{A}^{n})) \leq t G_{k}(\lambda; G_{n}(\omega; \mathbb{A}_{1}), \dots, G_{n}(\omega; \mathbb{A}_{k})) + \rho_{M, m} (t) I,
\end{displaymath}
where
\begin{displaymath}
\rho_{M, m} (t) =
\left\{
  \begin{array}{ll}
    (1 - t) m, & \hbox{$t \geq M/m$;} \\
    M + m - 2 \sqrt{t M m}, & \hbox{$m/M \leq t \leq M/m$;} \\
    (1 - t) M, & \hbox{$t \leq m/M$.}
  \end{array}
\right.
\end{displaymath}
\end{theorem}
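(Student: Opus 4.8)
The plan is to run the same four-ingredient template as in the proof of Theorem~\ref{T:Kantorovich1} — arithmetic--$G$ bound, reverse estimate, monotonicity, homogeneity — but to feed in an \emph{additive} (``difference form'') reverse arithmetic--harmonic inequality carrying the parameter $t$, in place of the multiplicative Kantorovich bound \eqref{E:reverse-A-G}. The key lemma I would isolate is: if $B_{1},\dots,B_{N}\in\mathbb{P}$ with $mI\le B_{\ell}\le MI$ for all $\ell$ and $\sigma=(\sigma_{1},\dots,\sigma_{N})\in\Delta_{N}$, then for every $t\ge 0$
\begin{displaymath}
\sum_{\ell=1}^{N}\sigma_{\ell}B_{\ell} \le t\left[\sum_{\ell=1}^{N}\sigma_{\ell}B_{\ell}^{-1}\right]^{-1} + \rho_{M,m}(t)\,I .
\end{displaymath}
Applied with $N=nk$ and the probability vector $(w_{i}\lambda_{j})_{i,j}\in\Delta_{nk}$ this says
\begin{displaymath}
\sum_{i,j} w_{i}\lambda_{j} A_{ij} \le t\left[\sum_{i,j} w_{i}\lambda_{j} A_{ij}^{-1}\right]^{-1} + \rho_{M,m}(t)\,I ,
\end{displaymath}
which is the estimate that will connect the two sides of the theorem.

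For the lemma I would argue as follows. Since $x^{-1}\le \frac{(M+m)-x}{Mm}$ on $[m,M]$ (equality at the endpoints, convexity in between), functional calculus gives $B_{\ell}^{-1}\le \frac{(M+m)I-B_{\ell}}{Mm}$ for each $\ell$; averaging with the weights $\sigma_{\ell}$ and then inverting — legitimate since $(M+m)I-\sum_{\ell}\sigma_{\ell}B_{\ell}\ge mI>0$ — yields
\begin{displaymath}
\left[\sum_{\ell}\sigma_{\ell}B_{\ell}^{-1}\right]^{-1} \ge Mm\left[(M+m)I-\sum_{\ell}\sigma_{\ell}B_{\ell}\right]^{-1}.
\end{displaymath}
Separately I would verify the scalar inequality $x\le \frac{tMm}{M+m-x}+\rho_{M,m}(t)$ for all $x\in[m,M]$: with the substitution $s=M+m-x\in[m,M]$ this is $M+m-\rho_{M,m}(t)\le s+\frac{tMm}{s}$, and minimizing $s\mapsto s+\frac{tMm}{s}$ over $[m,M]$ (interior minimum $2\sqrt{tMm}$ at $s=\sqrt{tMm}$ when $\sqrt{tMm}\in[m,M]$, i.e. $m/M\le t\le M/m$, and the relevant endpoint value otherwise) reproduces precisely the three cases defining $\rho_{M,m}(t)$. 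Applying this one-variable inequality via functional calculus to $\sum_{\ell}\sigma_{\ell}B_{\ell}$ (whose spectrum lies in $[m,M]$) and combining with the previous display gives the lemma.

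Granting the lemma, the theorem assembles as in Theorem~\ref{T:Kantorovich1}. On the left, two uses of the arithmetic--$G$ inequality of (P4) give
\begin{displaymath}
G_{n}(\omega; G_{k}(\lambda;\mathbb{A}^{1}),\dots,G_{k}(\lambda;\mathbb{A}^{n})) \le \sum_{i} w_{i}\, G_{k}(\lambda;\mathbb{A}^{i}) \le \sum_{i,j} w_{i}\lambda_{j} A_{ij}.
\end{displaymath}
On the right, $G_{n}(\omega;\mathbb{A}_{j})\ge \mathcal{H}(\omega;\mathbb{A}_{j})$ from (P4), the monotonicity (P2) of $G_{k}$, and one more use of the $G$--harmonic inequality of (P4) give
\begin{displaymath}
G_{k}(\lambda; G_{n}(\omega;\mathbb{A}_{1}),\dots,G_{n}(\omega;\mathbb{A}_{k})) \ge G_{k}(\lambda; \mathcal{H}(\omega;\mathbb{A}_{1}),\dots,\mathcal{H}(\omega;\mathbb{A}_{k})) \ge \left[\sum_{i,j} w_{i}\lambda_{j} A_{ij}^{-1}\right]^{-1}.
\end{displaymath}
Multiplying this last display by $t\ge 0$, adding $\rho_{M,m}(t)\,I$, and invoking the lemma closes the chain.

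The main obstacle is exactly the reason one cannot transcribe the proof of Theorem~\ref{T:Kantorovich1} with an additive bound: the plausible shortcut ``prove $\sum_{i}w_{i}A_{i}\le t\,G_{n}(\omega;\mathbf{A})+\rho_{M,m}(t)I$ and then push it through $G_{k}$'' fails, because there is no inequality $G_{k}(\lambda; tB_{1}+\rho I,\dots,tB_{k}+\rho I)\le t\,G_{k}(\lambda;B_{1},\dots,B_{k})+\rho I$ valid for all ordered means — it already breaks for the harmonic mean, and joint concavity (P3) only supplies the reverse inequality. The remedy, built into the plan above, is to collapse both nested means all the way down to the honest double arithmetic mean $\sum_{i,j}w_{i}\lambda_{j}A_{ij}$ and double harmonic mean $\left[\sum_{i,j}w_{i}\lambda_{j}A_{ij}^{-1}\right]^{-1}$ \emph{before} applying the difference-form estimate, so that the Kantorovich-type inequality is invoked only once, between genuine arithmetic and harmonic means, where it is available.
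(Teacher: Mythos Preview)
Your proposal is correct and follows essentially the same route as the paper: collapse the left side to the double arithmetic mean $\sum_{i,j}w_i\lambda_j A_{ij}$ and the right side to the double harmonic mean $\left[\sum_{i,j}w_i\lambda_j A_{ij}^{-1}\right]^{-1}$ via (P4), then invoke a single additive reverse arithmetic--harmonic inequality. The only difference is that the paper cites \cite[Theorem~2.2]{KMS} for the inequality $\Psi(A)\le t\,\Psi(A^{-1})^{-1}+\rho_{M,m}(t)I$ (phrased for a positive unital linear map applied to a block-diagonal operator), whereas you derive the same fact directly from the chord bound $x^{-1}\le\frac{M+m-x}{Mm}$ and the scalar optimization; the two arguments are equivalent, yours being self-contained.
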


\begin{proof}
It has been shown from \cite[Theorem 2.2]{KMS} that
\begin{displaymath}
\Psi(A) - t \Psi(A^{-1})^{-1} \leq \Psi(A) - t \left[ -\frac{1}{M m} \Psi(A) + \frac{M + m}{M m} I \right]^{-1}
\end{displaymath}
for any positive unital linear map $\Psi$ and any $t > 0$, where $A \in \mathbb{P}$ with $0 < m I \leq A \leq M I$. One can see easily that the function $\displaystyle f(x) = x - \frac{t M m}{M + m - x}$ has only one critical point $x_{0} = M + m - \sqrt{t M m}$ and $f''(x) < 0$ in the closed interval $[m, M]$. Thus, by fundamental calculation
\begin{displaymath}
\rho_{M, m} (t) = \underset{x \in [m, M]}{\max} f(x)
\end{displaymath}
is obtained as above, and
\begin{displaymath}
\Psi(A) \leq t \Psi(A^{-1})^{-1} + \rho_{M, m} (t) I
\end{displaymath}
for each $t > 0$ and any $A \in \mathbb{P}$ with $0 < m I \leq A \leq M I$. Then by the arithmetic-$G$-harmonic weighted mean inequalities in (P4),
\begin{displaymath}
\begin{split}
& G_{n}(\omega; G_{k}(\lambda; \mathbb{A}^{1}), \dots, G_{k}(\lambda; \mathbb{A}^{n})) - t G_{k}(\lambda; G_{n}(\omega; \mathbb{A}_{1}), \dots, G_{n}(\omega; \mathbb{A}_{k})) \\
& \leq \sum_{i,j} w_{i} \lambda_{j} A_{ij} - t \left[ \sum_{i,j} w_{i} \lambda_{j} A_{ij}^{-1} \right]^{-1}
= \Phi (\widehat{\mathbb{A}}) - t \Phi (\widehat{\mathbb{A}}^{-1})^{-1} \leq \rho_{M, m} (t) I,
\end{split}
\end{displaymath}
where $\displaystyle \Phi (\widehat{\mathbb{A}}) = \sum_{i,j} w_{i} \lambda_{j} A_{ij}$ for
$$ \widehat{\mathbb{A}} := A_{11} \oplus \cdots \oplus A_{1k} \oplus A_{21} \oplus \cdots \oplus A_{2k} \oplus \cdots \oplus A_{n1} \oplus \cdots \oplus A_{nk} $$ is the positive unital linear map for given probability vectors $\omega = (w_{1}, \dots, w_{n})$ and $\lambda = (\lambda_{1}, \dots, \lambda_{k})$.
\end{proof}

\begin{remark}
For $t = 1$ in Theorem \ref{T:Kantorovich2},
\begin{displaymath}
G_{n}(\omega; G_{k}(\lambda; \mathbb{A}^{1}), \dots, G_{k}(\lambda; \mathbb{A}^{n})) \leq G_{k}(\lambda; G_{n}(\omega; \mathbb{A}_{1}), \dots, G_{n}(\omega; \mathbb{A}_{k})) + (\sqrt{M} - \sqrt{m})^{2} I.
\end{displaymath}
\end{remark}

\section{Parameterized ordered means}

For given ordered mean $G = \{ G_{n} \}$, we define the parameterized ordered means $G^{\mu}: \Delta_{n} \times \mathbb{P}^{n} \to \mathbb{P}$ as
\begin{equation} \label{E:paramean}
G^{\mu}(\omega; \mathbf{A}) := G(\omega; \mathbf{A} + \mu \mathbf{I}) - \mu I, \ \mu \geq 0
\end{equation}
where $\mathbf{I} = (I, \dots, I) \in \mathbb{P}^{n}$ and $I$ is the identity operator, and
\begin{equation} \label{E:negative}
G^{\mu}(\omega; \mathbf{A}) := G^{-\mu}(\omega; \mathbf{A}^{-1})^{-1}, \ \ \mu < 0.
\end{equation}
We also denote as
\begin{center}
$\displaystyle G^{\infty}(\omega; \mathbf{A}) = \lim_{\mu \to \infty} G^{\mu}(\omega; \mathbf{A})$ \ and \ $\displaystyle G^{-\infty}(\omega; \mathbf{A}) = \lim_{\mu \to -\infty} G^{\mu}(\omega; \mathbf{A})$.
\end{center}
Using the arithmetic-$G$-harmonic weighted mean inequalities in (P4) and \eqref{E:negative}, we have
\begin{center}
$\displaystyle G^{\infty}(\omega; \mathbf{A}) = \mathcal{A}(\omega; \mathbf{A})$ \ and \ $\displaystyle G^{-\infty}(\omega; \mathbf{A}) = \mathcal{H}(\omega; \mathbf{A})$.
\end{center}

Y. Lim \cite{Lim} has established many remarkable properties of parameterized ordered means including a stochastic approximation and $L^{1}$ ergodic theorem for the parameterized Cartan (Karcher) mean.
From \cite[Proposition 5.3]{Lim}, we have the following properties of the parameterized ordered means $G^{\mu}$ induced from those of ordered means $G$.
\begin{proposition} \label{P:Paramean}
Let $\mathbf{A} = (A_{1}, \dots, A_{n}), \ \mathbf{B} = (B_{1}, \dots, B_{n}) \in \mathbb{P}^{n}, \ \omega = (w_{1}, \dots, w_{n}) \in \Delta_{n}$. The parameterized ordered mean $G^{\mu}$ for $\mu \in [-\infty, \infty]$ fulfills the following:
\begin{itemize}
\item[(1)] $($Homogeneity$)$ For a positive real number $a$ \\
$\left\{
  \begin{array}{ll}
    \displaystyle G^{\mu}(\omega; a \mathbf{A}) = a G^{\frac{\mu}{a}}(\omega; \mathbf{A}), & \hbox{$\mu \in [0, \infty]$;} \\
    \displaystyle G^{\mu}(\omega; a \mathbf{A}) = a G^{a \mu}(\omega; \mathbf{A}), & \hbox{$\mu \in [-\infty, 0)$;}
  \end{array}
\right.$ \vskip 1mm
\item[(2)] $($Monotonicity on variables$)$ If $B_{i} \leq A_{i}$ for all $1 \leq i \leq n,$ then
\begin{displaymath}
G^{\mu}(\omega; \mathbf{B}) \leq G^{\mu}(\omega; \mathbf{A});
\end{displaymath}
\item[(3)] $($Joint concavity$)$ For $\mu \in [0, \infty]$ and $0 \leq s \leq 1$
\begin{displaymath}
G^{\mu}(\omega; (1 - s) \mathbf{A} + s \mathbf{B}) \geq (1 - s) G^{\mu}(\omega; \mathbf{A}) + s G^{\mu}(\omega; \mathbf{B});
\end{displaymath}
\item[(4)] $($Arithmetic-$G^{\mu}$-harmonic weighted mean inequalities$)$
\begin{displaymath}
\left[ \sum_{i=1}^{n} w_{i} A_{i}^{-1} \right]^{-1} \leq G^{\mu}(\omega; \mathbf{A}) \leq \sum_{i=1}^{n} w_{i} A_{i};
\end{displaymath}
\item[(5)] $($Monotonicity on parameters$)$ For $0 \leq \nu \leq \mu \leq \infty$,
\begin{displaymath}
\mathcal{H} = G^{-\infty} \leq \cdots \leq G^{-\mu} \leq G^{-\nu} \leq \cdots \leq G^{0} = G \leq \cdots \leq G^{\nu} \leq G^{\mu} \leq \cdots \leq G^{\infty} = \mathcal{A};
\end{displaymath}
\item[(6)] $($Non-expansiveness$)$ $G^{\mu}$ is non-expansive for the Thompson metric, that is,
\begin{displaymath}
d_{T} (G^{\mu}(\omega; \mathbf{A}), G^{\mu}(\omega; \mathbf{B})) \leq \underset{1 \leq i \leq n}{\max} d_{T} (A_{i}, B_{i}).
\end{displaymath}
\end{itemize}
\end{proposition}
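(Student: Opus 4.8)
The plan is to deduce every item from the four defining properties (P1)--(P4) of the underlying ordered mean $G$, together with the two-clause definition \eqref{E:paramean}--\eqref{E:negative}, the facts that $X\mapsto X+\mu I$ is order preserving and $X\mapsto X^{-1}$ is order reversing on $\mathbb{P}$, and the endpoint identities $G^{\infty}=\mathcal{A}$, $G^{-\infty}=\mathcal{H}$ recorded just above.

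Items (1)--(3) are routine transcriptions. For $\mu\in[0,\infty)$: writing $a\mathbf{A}+\mu\mathbf{I}=a\bigl(\mathbf{A}+\tfrac{\mu}{a}\mathbf{I}\bigr)$ and using (P1) gives $G^{\mu}(\omega;a\mathbf{A})=aG\bigl(\omega;\mathbf{A}+\tfrac{\mu}{a}\mathbf{I}\bigr)-\mu I=aG^{\mu/a}(\omega;\mathbf{A})$, which is (1); $B_i\le A_i$ implies $B_i+\mu I\le A_i+\mu I$, so (P2) for $G$ yields (2) after subtracting $\mu I$; and since $(1-s)\mathbf{A}+s\mathbf{B}+\mu\mathbf{I}=(1-s)(\mathbf{A}+\mu\mathbf{I})+s(\mathbf{B}+\mu\mathbf{I})$, property (P3) for $G$ yields (3) after subtracting $\mu I=(1-s)\mu I+s\mu I$. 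For $\mu\in(-\infty,0)$ one applies the $\mu>0$ statements to $\mathbf{A}^{-1}$ (and, for (1), to $a^{-1}\mathbf{A}^{-1}$) and inverts via \eqref{E:negative}; the endpoints $\mu=\pm\infty$ follow from $G^{\infty}=\mathcal{A}$ and $G^{-\infty}=\mathcal{H}$, which obviously satisfy (1)--(2), together with a limiting argument, while (3) is asserted only on $[0,\infty]$ with $G^{\infty}=\mathcal{A}$ affine.

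The content of the statement is the monotonicity in the parameter, (5); once it is in hand, (4) and (6) are cheap. The crux is a super-additivity of $G$ under translation: for all $t\ge 0$ and $\mathbf{B}\in\mathbb{P}^{n}$,
\[
G_{n}(\omega;\mathbf{B}+t\mathbf{I})\ \ge\ G_{n}(\omega;\mathbf{B})+tI .
\]
Indeed, for $s\in(0,1]$ write $\mathbf{B}+t\mathbf{I}=(1-s)\mathbf{B}+s\bigl(\mathbf{B}+\tfrac{t}{s}\mathbf{I}\bigr)$ and apply (P3):
\[
G_{n}(\omega;\mathbf{B}+t\mathbf{I})\ \ge\ (1-s)\,G_{n}(\omega;\mathbf{B})+s\,G_{n}\!\left(\omega;\mathbf{B}+\tfrac{t}{s}\mathbf{I}\right);
\]
since $B_i+\tfrac{t}{s}I\ge\tfrac{t}{s}I$ for all $i$, the $G$-harmonic bound in (P4) gives $G_{n}\bigl(\omega;\mathbf{B}+\tfrac{t}{s}\mathbf{I}\bigr)\ge\bigl[\sum_i w_i(B_i+\tfrac{t}{s}I)^{-1}\bigr]^{-1}\ge\tfrac{t}{s}I$, hence $s\,G_{n}\bigl(\omega;\mathbf{B}+\tfrac{t}{s}\mathbf{I}\bigr)\ge tI$ for every $s$, and letting $s\to 0^{+}$ proves the claim. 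Taking $\mathbf{B}=\mathbf{A}+\nu\mathbf{I}$ and $t=\mu-\nu$ with $0\le\nu\le\mu<\infty$ gives $G(\omega;\mathbf{A}+\mu\mathbf{I})\ge G(\omega;\mathbf{A}+\nu\mathbf{I})+(\mu-\nu)I$, i.e.\ $G^{\nu}(\omega;\mathbf{A})\le G^{\mu}(\omega;\mathbf{A})$; the chain on $[-\infty,0]$ then follows by running the same estimate on $\mathbf{A}^{-1}$ and inverting via \eqref{E:negative}, and the comparisons with $G^{\pm\infty}$ and the junction of the two half-chains at $\mu=0$ come from (P4). Granting (5): for $\mu\ge 0$ the bounds in (4) follow from $\mathcal{H}\le G=G^{0}\le G^{\mu}$ (by (P4) and (5)) and from $G^{\mu}(\omega;\mathbf{A})=G(\omega;\mathbf{A}+\mu\mathbf{I})-\mu I\le\mathcal{A}(\omega;\mathbf{A}+\mu\mathbf{I})-\mu I=\mathcal{A}(\omega;\mathbf{A})$, and for $\mu<0$ by inverting the $\mu>0$ case; item (6) follows from the Thompson non-expansiveness of $G$ recorded above (see \cite{KumL}) and the barycentric formulation of parameterized means in \cite{Lim}.

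The step I expect to be the main obstacle is (5): isolating the translation super-additivity lemma in the sharp form above, and then the passage through $\mu=0$ that glues the half-chain on $[0,\infty]$ to the half-chain on $[-\infty,0]$, where (P4), the lemma, and the inversion symmetry of \eqref{E:negative} all have to be used at once. Everything else is bookkeeping on top of (P1)--(P4).
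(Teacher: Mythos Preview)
Your approach differs from the paper's in scope: the paper defers (2)--(6) to \cite[Proposition~5.3]{Lim}, writes out only (1) (via the same factor-out-$a$ computation you give), and remarks that (4) follows from the resolvent-mean sandwich $\mathcal{H}\le\mathcal{R}^{\mu}\le G^{\mu}\le\mathcal{A}$. Your self-contained treatment of (5) is the interesting part. Your translation super-additivity $G(\omega;\mathbf{B}+t\mathbf{I})\ge G(\omega;\mathbf{B})+tI$ via the convex split $\mathbf{B}+t\mathbf{I}=(1-s)\mathbf{B}+s(\mathbf{B}+\tfrac{t}{s}\mathbf{I})$, the harmonic lower bound from (P4), and $s\to 0^{+}$ is correct; a shorter variant avoiding the limit is $G(\omega;\mathbf{B}+t\mathbf{I})=(1+t)\,G\bigl(\omega;\tfrac{1}{1+t}\mathbf{B}+\tfrac{t}{1+t}\mathbf{I}\bigr)\ge G(\omega;\mathbf{B})+t\,G(\omega;\mathbf{I})=G(\omega;\mathbf{B})+tI$, using (P1), (P3) and idempotency. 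Either route gives the half-chain $G^{\nu}\le G^{\mu}$ on $[0,\infty)$, and the half-chain on $(-\infty,0)$ by inversion.

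The genuine gap is the junction at $\mu=0$. You assert that $G^{-\nu}\le G^{0}$ for $\nu>0$ ``comes from (P4)'', but unwinding \eqref{E:negative} this is $G^{\nu}(\omega;\mathbf{A}^{-1})^{-1}\le G(\omega;\mathbf{A})$, and as $\nu\to 0^{+}$ it forces $G(\omega;\mathbf{A}^{-1})^{-1}\le G(\omega;\mathbf{A})$, which (P1)--(P4) alone do \emph{not} imply. Take $G=\mathcal{H}$ (which satisfies (P1)--(P4)), equal weights, and scalars $A_{1}=1$, $A_{2}=4$: then $\mathcal{H}^{-1}(\omega;1,4)=\bigl(\mathcal{H}(\omega;2,\tfrac{5}{4})-1\bigr)^{-1}=\tfrac{13}{7}$ whereas $\mathcal{H}^{0}(\omega;1,4)=\tfrac{8}{5}$, so $G^{-1}>G^{0}$ and the chain in (5) actually breaks across $0$ for this ordered mean. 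The paper sidesteps the issue by citing \cite{Lim}; a self-contained argument of the kind you attempt would need an additional hypothesis such as $G(\omega;\mathbf{A}^{-1})^{-1}\le G(\omega;\mathbf{A})$ (satisfied by self-dual means like the Karcher and power means, but not by $\mathcal{H}$) to glue the two half-chains.
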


\begin{proof}
Most of properties have been proved in \cite[Proposition 5.3]{Lim}. Especially, the arithmetic-$G^{\mu}$-harmonic weighted mean inequalities (4) is derived from
\begin{displaymath}
\mathcal{H}(\omega; \mathbf{A}) \leq \mathcal{R}^{\mu}(\omega; \mathbf{A}) \leq G^{\mu}(\omega; \mathbf{A}) \leq \mathcal{A}(\omega; \mathbf{A}),
\end{displaymath}
where $\displaystyle \mathcal{R}^{\mu}(\omega; \mathbf{A}) := \left[ \sum_{i=1}^{n} w_{i} (A_{i} + \mu I)^{-1} \right]^{-1} - \mu I$ is the resolvent mean \cite{KLL}.

We show the homogeneity (1).
\begin{itemize}
\item[(1)] Let $a > 0$. For $\mu \geq 0$, by the homogeneity of ordered means (P1)
\begin{displaymath}
\begin{split}
G^{\mu}(\omega; a \mathbf{A}) & = G(\omega; a A_{1} + \mu I, \dots, a A_{n} + \mu I) - \mu I \\
& = a G(\omega; A_{1} + (\mu/a) I, \dots, A_{n} + (\mu/a) I) - \mu I = a G^{\frac{\mu}{a}}(\omega; \mathbf{A}).
\end{split}
\end{displaymath}
For $\mu < 0$, similarly by using the above result together with \eqref{E:negative}
\begin{displaymath}
G^{\mu}(\omega; a \mathbf{A}) = G^{-\mu} \left( \omega; a^{-1} \mathbf{A}^{-1} \right)^{-1}
= \left[ a^{-1} G^{-a \mu}(\omega; \mathbf{A}^{-1}) \right]^{-1} = a G^{a \mu}(\omega; \mathbf{A}).
\end{displaymath}
\end{itemize}
\end{proof}

\begin{proposition}
Let $G = \{ G_{n} \}$ be the ordered mean satisfying that for each $n$ and any $\omega = (w_{1}, \dots, w_{n}) \in \Delta_{n}$
\begin{itemize}
\item[(i)] $G_{n}$ is invariant under permutation, that is, for any permutation $\sigma$ on $n$ letters
\begin{displaymath}
G_{n}(\omega_{\sigma}; \mathbf{A}_{\sigma}) = G_{n} (\omega; \mathbf{A}),
\end{displaymath}
where $\omega_{\sigma} = (w_{\sigma(1)}, \dots, w_{\sigma(n)})$ and $\mathbf{A}_{\sigma} = (A_{\sigma(1)}, \dots, A_{\sigma(n)})$,
\item[(ii)] $G_{n}$ is invariant under repetition, that is, for each natural number $k \in \mathbb{N}$
\begin{displaymath}
G_{nk}(\omega^{(k)}; \underbrace{A_{1}, \dots, A_{n}}, \dots, \underbrace{A_{1}, \dots, A_{n}}) = G_{n} (\omega; A_{1}, \dots, A_{n}),
\end{displaymath}
where $\omega^{(k)} = \frac{1}{k} (\underbrace{w_{1}, \dots, w_{n}}, \dots, \underbrace{w_{1}, \dots, w_{n}}) \in \Delta_{nk}$,
\item[(iii)] $G_{n}$ is invariant under congruence transformation, that is, for any invertible operator $S \in B(\mathcal{H})$
\begin{displaymath}
G_{n}(\omega; S^{*} \mathbf{A} S) = S^{*} G_{n}(\omega; \mathbf{A}) S,
\end{displaymath}
where $S^{*} \mathbf{A} S = (S^{*} A_{1} S, \dots, S^{*} A_{n} S)$,
\item[(iv)] $G_{n} (\omega; A_{1}, \dots, A_{n-1}, X) = X$ if and only if $X = G_{n-1} (\hat{\omega}; A_{1}, \dots, A_{n-1})$, where $\hat{\omega} = \frac{1}{1 - w_{n}} (w_{1}, \dots, w_{n-1}) \in \Delta_{n-1}$,
\item[(v)] $\Phi(G_{n}(\omega; A_{1}, \dots, A_{n})) \leq G_{n}(\omega; \Phi(A_{1}), \dots, \Phi(A_{n}))$ for any positive unital linear map $\Phi$.
\end{itemize}
Then the corresponding parameterized ordered mean $G^{\mu}$ for $\mu \in [-\infty, \infty]$ holds the same properties (i), (ii), (iii) for any unitary operator $S$, and (iv). Moreover,
\begin{displaymath}
\left\{
  \begin{array}{ll}
    \displaystyle \Phi(G_{n}^{\mu}(\omega; A_{1}, \dots, A_{n})) \leq G_{n}^{\mu}(\omega; \Phi(A_{1}), \dots, \Phi(A_{n})), & \hbox{$\mu \in [0, \infty]$;} \\
    \displaystyle \Phi(G_{n}^{\mu}(\omega; A_{1}, \dots, A_{n})) \geq G_{n}^{\mu}(\omega; \Phi(A_{1}^{-1})^{-1}, \dots, \Phi(A_{n}^{-1})^{-1}), & \hbox{$\mu \in [-\infty, 0)$.}
  \end{array}
\right.
\end{displaymath}
\end{proposition}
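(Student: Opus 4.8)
The plan is to treat the three regimes $\mu\in[0,\infty)$, $\mu\in(-\infty,0)$, and $\mu\in\{-\infty,+\infty\}$ in turn, deducing the negative-parameter statements from the nonnegative ones via the duality \eqref{E:negative} and the endpoint statements either from the monotone limits $G^{\infty}=\mathcal{A}$, $G^{-\infty}=\mathcal{H}$ or by passing to the limit in Proposition \ref{P:Paramean}.

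For $\mu\in[0,\infty)$ everything is obtained by pushing the corresponding property of $G$ through the affine translation $\mathbf{A}\mapsto\mathbf{A}+\mu\mathbf{I}$ and then subtracting $\mu I$. The translation commutes with permutation and with repetition, so (i) and (ii) for $G$ give (i) and (ii) for $G^{\mu}$ at once. For a \emph{unitary} $S$ one has $S^{*}(A_i+\mu I)S=S^{*}A_iS+\mu I$, hence $G^{\mu}(\omega;S^{*}\mathbf{A}S)=G(\omega;S^{*}(\mathbf{A}+\mu\mathbf{I})S)-\mu I=S^{*}G(\omega;\mathbf{A}+\mu\mathbf{I})S-\mu I=S^{*}G^{\mu}(\omega;\mathbf{A})S$ by (iii); for a general invertible $S$ the identity $S^{*}(\mu I)S=\mu I$ fails, which is precisely why only unitary congruence survives, and I will point this out. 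For (iv): $G^{\mu}_{n}(\omega;A_1,\dots,A_{n-1},X)=X$ is equivalent to $G_{n}(\omega;A_1+\mu I,\dots,A_{n-1}+\mu I,X+\mu I)=X+\mu I$, which by (iv) for $G$ is equivalent to $X+\mu I=G_{n-1}(\hat\omega;A_1+\mu I,\dots,A_{n-1}+\mu I)$, i.e. $X=G^{\mu}_{n-1}(\hat\omega;A_1,\dots,A_{n-1})$. For the $\Phi$-inequality, unitality of $\Phi$ gives $\Phi(A_i+\mu I)=\Phi(A_i)+\mu I$ and $\Phi(G^{\mu}_{n}(\omega;\mathbf{A}))=\Phi(G_{n}(\omega;\mathbf{A}+\mu\mathbf{I}))-\mu I$, so (v) for $G$ yields $\Phi(G^{\mu}_{n}(\omega;\mathbf{A}))\le G_{n}(\omega;\Phi(A_1)+\mu I,\dots,\Phi(A_n)+\mu I)-\mu I=G^{\mu}_{n}(\omega;\Phi(A_1),\dots,\Phi(A_n))$.

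For $\mu\in(-\infty,0)$ set $\nu=-\mu>0$ and use $G^{\mu}(\omega;\mathbf{A})=G^{\nu}(\omega;\mathbf{A}^{-1})^{-1}$ from \eqref{E:negative}. Properties (i), (ii), unitary (iii), and (iv) transfer from the $\nu\ge0$ case just established because the inversions $X\mapsto X^{-1}$ and $\mathbf{A}\mapsto\mathbf{A}^{-1}$ commute with permutation, with repetition, and with unitary congruence (as $(S^{*}XS)^{-1}=S^{*}X^{-1}S$ when $S^{*}=S^{-1}$), and $X\mapsto X^{-1}$ is order-reversing, which is compatible with the monotonicity (2) of Proposition \ref{P:Paramean}. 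The one genuinely new ingredient is the $\Phi$-inequality for negative $\mu$, and here I combine the $\nu\ge0$ case with Choi's inequality $\Phi(B)^{-1}\le\Phi(B^{-1})$ for positive invertible $B$ and unital positive $\Phi$. Writing $Y=G^{\nu}_{n}(\omega;\mathbf{A}^{-1})$, Choi gives $\Phi(G^{\mu}_{n}(\omega;\mathbf{A}))=\Phi(Y^{-1})\ge\Phi(Y)^{-1}$; the $\nu\ge0$ $\Phi$-inequality applied to the tuple $\mathbf{A}^{-1}$ gives $\Phi(Y)\le G^{\nu}_{n}(\omega;\Phi(A_1^{-1}),\dots,\Phi(A_n^{-1}))$, and inverting this order relation yields $\Phi(Y)^{-1}\ge G^{\nu}_{n}(\omega;\Phi(A_1^{-1}),\dots,\Phi(A_n^{-1}))^{-1}=G^{\mu}_{n}(\omega;\Phi(A_1^{-1})^{-1},\dots,\Phi(A_n^{-1})^{-1})$, the last step being \eqref{E:negative} once more (note $\Phi(A_i^{-1})^{-1}$ is positive definite since $\Phi$ is strictly positive and unital). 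Chaining the two relations gives the stated reverse inequality.

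Finally the endpoints: $G^{\infty}=\mathcal{A}$ makes (i)--(iv) immediate and turns the $\Phi$-inequality into the equality $\Phi(\mathcal{A}(\omega;\mathbf{A}))=\mathcal{A}(\omega;\Phi(A_1),\dots,\Phi(A_n))$ by linearity and unitality, while $G^{-\infty}=\mathcal{H}$ gives (i)--(iv) from operator convexity of inversion, and $\Phi(\mathcal{H}(\omega;\mathbf{A}))=\Phi\big(\big[\sum w_iA_i^{-1}\big]^{-1}\big)\ge\big[\sum w_i\Phi(A_i^{-1})\big]^{-1}=\mathcal{H}(\omega;\Phi(A_1^{-1})^{-1},\dots,\Phi(A_n^{-1})^{-1})$ again by Choi; alternatively one takes $\mu\to\pm\infty$ in the finite cases using the order-continuity afforded by the monotonicity (5) of Proposition \ref{P:Paramean}. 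I expect the only real subtlety to be the sign/inversion bookkeeping in the negative-parameter $\Phi$-inequality and the recognition that Choi's inequality is exactly what bridges it to the already-proved nonnegative case; all the remaining items are a routine transport of structure through the translation $\mathbf{A}\mapsto\mathbf{A}+\mu\mathbf{I}$ and through componentwise inversion.
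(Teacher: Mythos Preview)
Your proof is correct and follows essentially the same approach as the paper: properties (i)--(iv) are transported through the translation $\mathbf{A}\mapsto\mathbf{A}+\mu\mathbf{I}$ (and then through inversion for $\mu<0$), while the $\Phi$-inequality for $\mu\ge0$ uses unitality of $\Phi$ together with (v), and for $\mu<0$ combines Choi's inequality with the nonnegative case and order reversal of inversion. You supply more detail than the paper (which declares (i)--(iv) ``obvious'' and treats only the $\Phi$-inequality explicitly), including the explanation of why only unitary congruence survives and a separate treatment of the endpoints $\mu=\pm\infty$, but the substance is the same.
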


\begin{proof}
It is obvious from the properties (i)-(iv) of the ordered means $G$ that the corresponding parameterized ordered mean $G^{\mu}$ for $\mu \in [-\infty, \infty]$ holds the same properties.

For $\mu \in [0, \infty]$, applying (v) with the positive unital linear map $\Phi$ we have
\begin{displaymath}
\begin{split}
\Phi(G_{n}^{\mu} (\omega; A_{1}, \dots, A_{n})) & \leq G_{n} (\omega; \Phi(A_{1} + \mu I), \dots, \Phi(A_{n} + \mu I)) - \mu I \\
& = G_{n} (\omega; \Phi(A_{1}) + \mu I, \dots, \Phi(A_{n}) + \mu I) - \mu I \\
& = G_{n}^{\mu}(\omega; \Phi(A_{1}), \dots, \Phi(A_{n})).
\end{split}
\end{displaymath}
Similarly, for $\mu \in [-\infty, 0)$
\begin{displaymath}
\begin{split}
\Phi(G_{n}^{\mu} (\omega; A_{1}, \dots, A_{n})) & = \Phi \left( G_{n}^{-\mu} (\omega; A_{1}^{-1}, \dots, A_{n}^{-1})^{-1} \right) \\
& \geq \Phi \left( G_{n}^{-\mu} (\omega; A_{1}^{-1}, \dots, A_{n}^{-1}) \right)^{-1} \\
& \geq G_{n}^{-\mu} \left( \omega; \Phi(A_{1}^{-1}), \dots, \Phi(A_{n}^{-1}) \right)^{-1} \\
& = G_{n}^{\mu}(\omega; \Phi(A_{1}^{-1})^{-1}, \dots, \Phi(A_{n}^{-1})^{-1}).
\end{split}
\end{displaymath}
The first and last equalities follow from the definition \eqref{E:negative}, the first inequality from Choi's inequality in \cite[Theorem 2.3.6]{Bh}, and the second inequality from (v) and the order reversing of inversion.
\end{proof}

\begin{theorem} \label{T:Paramean-concave}
Let $\mathbb{A} = [ A_{ij} ]$ be the $n$-by-$k$ block matrix, where $A_{ij} \in \mathbb{P}$ for all $i,j$. Let $\omega = (w_{1}, \dots, w_{n}) \in \Delta_{n}$ and $\lambda = (\lambda_{1}, \dots, \lambda_{k}) \in \Delta_{k}$. Then for any $\mu_{1}, \dots, \mu_{n} \geq 0$
\begin{displaymath}
\sum_{i=1}^{n} w_{i} G_{k}^{\mu_{i}}(\lambda; \mathbb{A}^{i}) \leq G_{k}^{\omega \bullet \mu} \left( \lambda; \sum_{i=1}^{n} w_{i} \mathbb{A}^{i} \right),
\end{displaymath}
where $\displaystyle \omega \bullet \mu = \sum_{i=1}^{n} w_{i} \mu_{i}$ for $\mu := (\mu_{1}, \dots, \mu_{n})$.
\end{theorem}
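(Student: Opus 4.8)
The plan is to unwind the definition \eqref{E:paramean} of the parameterized ordered mean on both sides of the asserted inequality and reduce everything to the joint concavity (P3) of the underlying ordered mean $G_{k}$. First, since $\mu_{i} \geq 0$ for every $i$ and $\omega \in \Delta_{n}$, the number $\omega \bullet \mu = \sum_{i} w_{i} \mu_{i}$ is nonnegative, so both means in the statement are defined through the nonnegative branch \eqref{E:paramean}. Expanding the definitions,
\begin{displaymath}
\sum_{i=1}^{n} w_{i} G_{k}^{\mu_{i}}(\lambda; \mathbb{A}^{i}) = \sum_{i=1}^{n} w_{i} \big[ G_{k}(\lambda; \mathbb{A}^{i} + \mu_{i} \mathbf{I}) - \mu_{i} I \big] = \sum_{i=1}^{n} w_{i} G_{k}(\lambda; \mathbb{A}^{i} + \mu_{i} \mathbf{I}) - (\omega \bullet \mu) I,
\end{displaymath}
while
\begin{displaymath}
G_{k}^{\omega \bullet \mu}\Big(\lambda; \sum_{i=1}^{n} w_{i} \mathbb{A}^{i}\Big) = G_{k}\Big(\lambda; \sum_{i=1}^{n} w_{i} \mathbb{A}^{i} + (\omega \bullet \mu) \mathbf{I}\Big) - (\omega \bullet \mu) I.
\end{displaymath}

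The next step is the bookkeeping identity $\sum_{i=1}^{n} w_{i} \mathbb{A}^{i} + (\omega \bullet \mu) \mathbf{I} = \sum_{i=1}^{n} w_{i}(\mathbb{A}^{i} + \mu_{i} \mathbf{I})$, which holds coordinatewise since $\sum_{i} w_{i}\mu_{i} = \omega \bullet \mu$; this is precisely why the parameter of the mean on the right-hand side is forced to be the $\omega$-average of the $\mu_{i}$. Cancelling the common additive term $-(\omega \bullet \mu) I$, the theorem reduces to
\begin{displaymath}
\sum_{i=1}^{n} w_{i} G_{k}(\lambda; \mathbb{A}^{i} + \mu_{i} \mathbf{I}) \leq G_{k}\Big(\lambda; \sum_{i=1}^{n} w_{i}(\mathbb{A}^{i} + \mu_{i} \mathbf{I})\Big),
\end{displaymath}
that is, the concavity of the map $\mathbf{C} \mapsto G_{k}(\lambda; \mathbf{C})$ along the convex combination with weights $w_{i}$ of the $k$-tuples $\mathbf{C}^{i} := \mathbb{A}^{i} + \mu_{i} \mathbf{I} \in \mathbb{P}^{k}$.

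Property (P3) delivers exactly this for two summands; the general $n$-term version follows by a routine induction on $n$ (peel off one weight, renormalize the remaining $w_{i}$'s to a probability vector, apply (P3) once and the inductive hypothesis once), using that $\mathbb{P}^{k}$ is convex so every intermediate tuple remains in the domain of $G_{k}$. I do not expect any real obstacle here: the only points requiring care are the additive bookkeeping of the $\mu_{i}$-shifts—checking that the constant terms cancel and that the combined shift is precisely $\omega \bullet \mu$—and the (standard) promotion of (P3) from two to $n$ terms; everything else is direct substitution into \eqref{E:paramean} followed by a single appeal to joint concavity.
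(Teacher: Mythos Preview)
Your proof is correct and follows essentially the same approach as the paper: expand the definition \eqref{E:paramean}, observe that the $\mu_i$-shifts combine to a single $(\omega\bullet\mu)$-shift, cancel the additive constant, and apply the joint concavity (P3). The paper simply invokes (P3) in its $n$-term form without spelling out the induction, but otherwise the argument is identical.
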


\begin{proof}
By the joint concavity of ordered means (P3), we have
\begin{displaymath}
\begin{split}
\sum_{i=1}^{n} w_{i} G_{k}^{\mu_{i}}(\lambda; \mathbb{A}^{i}) & = \sum_{i=1}^{n} w_{i} G_{k} (\lambda; \mathbb{A}^{i} + \mu_{i} I) - \sum_{i=1}^{n} w_{i} \mu_{i} I \\
& \leq G_{k} \left( \lambda; \sum_{i=1}^{n} w_{i} (\mathbb{A}^{i} + \mu_{i} I) \right) - \sum_{i=1}^{n} w_{i} \mu_{i} I = G_{k}^{\omega \bullet \mu} \left( \lambda; \sum_{i=1}^{n} w_{i} \mathbb{A}^{i} \right).
\end{split}
\end{displaymath}
\end{proof}

\begin{remark}
For $n = 2$, taking $\mu_{1} = \mu_{2} = \nu (\geq 0)$ and $\omega = (1 - t, t)$ for $t \in [0,1]$ in Theorem \ref{T:Paramean-concave} yields the joint concavity in Proposition \ref{P:Paramean} (3):
\begin{displaymath}
(1-t) G^{\nu}(\lambda; \mathbf{A}) + t G^{\nu}(\lambda; \mathbf{B}) \leq G^{\nu}(\lambda; (1-t) \mathbf{A} + t \mathbf{B}).
\end{displaymath}
So Theorem \ref{T:Paramean-concave} is a multivariate extension of the joint concavity.
\end{remark}

\begin{theorem}
Let $\mathbb{A} = [ A_{ij} ]$ be the $n$-by-$k$ block matrix, where $0 < m I \leq A_{ij} \leq M I$ for some constants $M, m > 0$. Let $\omega = (w_{1}, \dots, w_{n}) \in \Delta_{n}$ and $\lambda = (\lambda_{1}, \dots, \lambda_{k}) \in \Delta_{k}$. Then
\begin{itemize}
\item[(i)] for any $\mu_{1}, \dots, \mu_{n}, \nu \geq 0$
\begin{displaymath}
G_{n}^{\nu}(\omega; G_{k}^{\mu_{1}}(\lambda; \mathbb{A}^{1}), \dots, G_{k}^{\mu_{n}}(\lambda; \mathbb{A}^{n})) \leq K G_{k}^{\omega \bullet \mu} (\lambda; G_{n}^{\nu}(\omega; \mathbb{A}_{1}), \dots, G_{n}^{\nu}(\omega; \mathbb{A}_{k}));
\end{displaymath}
\item[(ii)] for any $\mu_{1}, \dots, \mu_{n}, \nu < 0$
\begin{displaymath}
G_{n}^{\nu}(\omega; G_{k}^{\mu_{1}}(\lambda; \mathbb{A}^{1}), \dots, G_{k}^{\mu_{n}}(\lambda; \mathbb{A}^{n})) \leq K^{-1} G_{k}^{\omega \bullet \mu} (\lambda; G_{n}^{\nu}(\omega; \mathbb{A}_{1}), \dots, G_{n}^{\nu}(\omega; \mathbb{A}_{k})),
\end{displaymath}
\end{itemize}
where $\displaystyle K = \frac{(M + m)^{2}}{4 M m}$.
\end{theorem}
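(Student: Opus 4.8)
My plan is to treat (i) by a direct computation that parallels the proof of Theorem \ref{T:Kantorovich1}, now carrying the two extra parameters through the parameterized machinery of Proposition \ref{P:Paramean}, and then to obtain (ii) from (i) by exploiting the inversion symmetry \eqref{E:negative} that defines the negative-parameter means.

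For (i), I would start from the outer arithmetic bound in Proposition \ref{P:Paramean}(4), which gives $G_n^{\nu}(\omega; G_k^{\mu_1}(\lambda;\mathbb A^1),\dots,G_k^{\mu_n}(\lambda;\mathbb A^n)) \le \sum_i w_i\, G_k^{\mu_i}(\lambda;\mathbb A^i)$. The concavity-type inequality of Theorem \ref{T:Paramean-concave} then collapses this weighted sum into a single parameterized mean with parameter $\omega\bullet\mu$ evaluated at the columnwise arithmetic means $\sum_i w_i \mathbb A^i = (\mathcal A(\omega;\mathbb A_1),\dots,\mathcal A(\omega;\mathbb A_k))$. Next I would insert the reverse arithmetic--$G$ estimate: since $G_n^{\nu}(\omega;\mathbb A_j)\ge \mathcal H(\omega;\mathbb A_j)$ by Proposition \ref{P:Paramean}(4), the Kantorovich bound \eqref{E:reverse-A-G} upgrades to $\mathcal A(\omega;\mathbb A_j)\le K\,\mathcal H(\omega;\mathbb A_j)\le K\,G_n^{\nu}(\omega;\mathbb A_j)$, and monotonicity in the variables (Proposition \ref{P:Paramean}(2)) pushes this inside $G_k^{\omega\bullet\mu}$. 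The one genuinely new wrinkle relative to Theorem \ref{T:Kantorovich1} is the homogeneity law: by Proposition \ref{P:Paramean}(1), pulling out $K$ rescales the parameter, $G_k^{\omega\bullet\mu}(\lambda; K\,\cdot)=K\,G_k^{(\omega\bullet\mu)/K}(\lambda;\cdot)$, so I would finish with the monotonicity on parameters, Proposition \ref{P:Paramean}(5): because $K\ge 1$ we have $(\omega\bullet\mu)/K\le \omega\bullet\mu$, hence $G_k^{(\omega\bullet\mu)/K}\le G_k^{\omega\bullet\mu}$, which absorbs the rescaling and produces the stated factor $K$.

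For (ii) the plan is to reduce to (i) by inversion. Set $\mathbb B=[A_{ij}^{-1}]$ and $\tilde\mu_i=-\mu_i>0$, $\tilde\nu=-\nu>0$. Using \eqref{E:negative} I would rewrite each negative-parameter mean as the inverse of a positive-parameter mean of the inverted data, $G_k^{\mu_i}(\lambda;\mathbb A^i)=G_k^{\tilde\mu_i}(\lambda;\mathbb B^i)^{-1}$ and $G_n^{\nu}(\omega;\mathbb A_j)=G_n^{\tilde\nu}(\omega;\mathbb B_j)^{-1}$, and then pull the remaining outer inversion through. This identifies the left- and right-hand mixed means of (ii) with $P^{-1}$ and $Q^{-1}$, where $P=G_n^{\tilde\nu}(\omega; G_k^{\tilde\mu_1}(\lambda;\mathbb B^1),\dots,G_k^{\tilde\mu_n}(\lambda;\mathbb B^n))$ and $Q=G_k^{\omega\bullet\tilde\mu}(\lambda; G_n^{\tilde\nu}(\omega;\mathbb B_1),\dots,G_n^{\tilde\nu}(\omega;\mathbb B_k))$ are exactly the two mixed means of (i) formed from $\mathbb B$, with $\omega\bullet\tilde\mu=-(\omega\bullet\mu)>0$. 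Because $A_{ij}^{-1}\in[M^{-1}I,m^{-1}I]$ yields the same Kantorovich constant, $\frac{(M^{-1}+m^{-1})^2}{4M^{-1}m^{-1}}=K$, part (i) applies verbatim to $\mathbb B$ and gives $P\le KQ$. Statement (ii) is then to be read off by inverting this estimate on $\mathbb P$ and carrying the scalar across the order-reversing inversion, which turns the factor $K$ into $K^{-1}$.

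The main obstacle, and the step to be executed with the most care, is precisely the bookkeeping of the inequality direction through the order-reversing inversion in (ii): one must verify that after the rewriting via \eqref{E:negative} the two sides of (ii) really do sit as $P^{-1}$ and $Q^{-1}$ for the pair $(P,Q)$ controlled by part (i), and then confirm that inverting $P\le KQ$ places the Kantorovich factor on the stated side and in the stated direction. A secondary, lower-risk point to check back in (i) is that the rescaled parameter $(\omega\bullet\mu)/K$ remains in the range where Proposition \ref{P:Paramean}(5) is applicable, which is automatic here from $K\ge 1$ and $\omega\bullet\mu\ge 0$.
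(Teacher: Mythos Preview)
Your plan for both parts is exactly the paper's proof. For (i) you use, in order, the arithmetic-$G^{\nu}$ bound (Proposition~\ref{P:Paramean}(4)), Theorem~\ref{T:Paramean-concave}, the Kantorovich reverse estimate via Proposition~\ref{P:Paramean}(4) and \eqref{E:reverse-A-G}, homogeneity (Proposition~\ref{P:Paramean}(1)) to rescale the parameter to $(\omega\bullet\mu)/K$, and finally monotonicity in the parameter (Proposition~\ref{P:Paramean}(5)); the paper proceeds identically. For (ii) the paper also reduces to (i) via \eqref{E:negative} applied twice, exactly as you describe with $\mathbb B=[A_{ij}^{-1}]$ and the observation that the Kantorovich constant is unchanged under $m\leftrightarrow M^{-1}$, $M\leftrightarrow m^{-1}$.

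Your caution about the inequality direction in (ii) is entirely warranted and in fact decisive. Carrying out your identification, the left side of (ii) is $P^{-1}$ and the right side (without the scalar) is $Q^{-1}$; part (i) gives $P\le KQ$, and inversion on $\mathbb P$ yields $P^{-1}\ge K^{-1}Q^{-1}$. Thus what the argument proves is
\[
G_{n}^{\nu}(\omega; G_{k}^{\mu_{1}}(\lambda; \mathbb{A}^{1}), \dots, G_{k}^{\mu_{n}}(\lambda; \mathbb{A}^{n})) \;\ge\; K^{-1}\, G_{k}^{\omega \bullet \mu} (\lambda; G_{n}^{\nu}(\omega; \mathbb{A}_{1}), \dots, G_{n}^{\nu}(\omega; \mathbb{A}_{k})),
\]
and this is precisely what the paper's own proof of (ii) displays (the chain there reads ``$=\dots\ge K^{-1}\dots$''). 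The ``$\le$'' in the printed statement of (ii) is therefore a misprint for ``$\ge$''; your approach is correct and coincides with the paper's, but do not try to force the stated direction---it does not follow.
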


\begin{proof}
Assume that $0 < m I \leq A_{ij} \leq M I$ for some constants $M, m > 0$, where $\mathbb{A} = [ A_{ij} ]$ is the $n$-by-$k$ block matrix.
\begin{itemize}
\item[(i)] Since the parameterized ordered mean $G^{\nu}$ satisfies the arithmetic-$G^{\nu}$-harmonic weighted mean inequalities in Proposition \ref{P:Paramean} (4), we have from \eqref{E:reverse-A-G}
\begin{equation} \label{E:reverse-A-paraG}
\sum_{i=1}^{n} w_{i} A_{i} \leq K G_{n}^{\nu}(\omega; A_{1}, \dots, A_{n}).
\end{equation}
Then for $\mu_{1}, \dots, \mu_{n}, \nu \geq 0$
\begin{displaymath}
\begin{split}
G_{n}^{\nu}(\omega; G_{k}^{\mu_{1}}(\lambda; \mathbb{A}^{1}), \dots, G_{k}^{\mu_{n}}(\lambda; \mathbb{A}^{n})) & \leq \sum_{i=1}^{n} w_{i} G_{k}^{\mu_{i}}(\lambda; \mathbb{A}^{i}) \\
& \leq G_{k}^{\omega \bullet \mu} \left( \lambda; \sum_{i=1}^{n} w_{i} \mathbb{A}^{i} \right) \\
& = G_{k}^{\omega \bullet \mu} \left( \lambda; \sum_{i=1}^{n} w_{i} A_{i1}, \dots, \sum_{i=1}^{n} w_{i} A_{ik} \right) \\
& \leq G_{k}^{\omega \bullet \mu} (\lambda; K G_{n}^{\nu}(\omega; \mathbb{A}_{1}), \dots, K G_{n}^{\nu}(\omega; \mathbb{A}_{k})) \\
& = K G_{k}^{\frac{\omega \bullet \mu}{K}} (\lambda; G_{n}^{\nu}(\omega; \mathbb{A}_{1}), \dots, G_{n}^{\nu}(\omega; \mathbb{A}_{k})) \\
& \leq K G_{k}^{\omega \bullet \mu} (\lambda; G_{n}^{\nu}(\omega; \mathbb{A}_{1}), \dots, G_{n}^{\nu}(\omega; \mathbb{A}_{k})).
\end{split}
\end{displaymath}
The first inequality follows from the arithmetic-$G^{\nu}$ weighted mean inequality in Proposition \ref{P:Paramean} (4), the second inequality from Theorem \ref{T:Paramean-concave}, the third inequality from \eqref{E:reverse-A-paraG}, the second equality from the homogeneity in Proposition \ref{P:Paramean} (1), and the last inequality from the monotonicity of parameterized ordered means for parameters in Proposition \ref{P:Paramean} (5) since $K \geq 1$.
\item[(ii)] For $\mu_{1}, \dots, \mu_{n}, \nu < 0$
\begin{displaymath}
\begin{split}
G_{n}^{\nu}(\omega; G_{k}^{\mu_{1}}(\lambda; \mathbb{A}^{1}), \dots, G_{k}^{\mu_{n}}(\lambda; \mathbb{A}^{n})) & = G_{n}^{-\nu} \left( \omega; G_{k}^{-\mu_{1}} \left( \lambda; (\mathbb{A}^{1})^{-1} \right), \dots, G_{k}^{-\mu_{n}} \left( \lambda; (\mathbb{A}^{n})^{-1} \right) \right)^{-1} \\
& \geq K^{-1} G_{k}^{\omega \bullet (-\mu)} \left( \lambda; G_{n}^{-\nu} (\omega; (\mathbb{A}_{1})^{-1}), G_{n}^{-\nu} (\omega; (\mathbb{A}_{k})^{-1}) \right)^{-1} \\
& = K^{-1} G_{k}^{- \omega \bullet \mu} \left( \lambda; G_{n}^{\nu} (\omega; \mathbb{A}_{1})^{-1}, G_{n}^{\nu} (\omega; \mathbb{A}_{k})^{-1} \right)^{-1} \\
& = K^{-1} G_{k}^{\omega \bullet \mu} (\lambda; G_{n}^{\nu}(\omega; \mathbb{A}_{1}), \dots, G_{n}^{\nu}(\omega; \mathbb{A}_{k})).
\end{split}
\end{displaymath}
The first equality follows from \eqref{E:negative}, the inequality from (i) and the order reversing of inversion, and the second and last equalities again from \eqref{E:negative}.
\end{itemize}
\end{proof}

\section{Interpolation with power means}

Let $\mathbf{a} = (a_{1}, a_{2}, \dots, a_{n})$ be an $n$-tuple of given positive real numbers and a positive probability vector $\omega = (w_{1}, \dots, w_{n})$. The generalized means, also called the H\"{o}lder mean, with exponent $p \in \mathbb{R}$ are a family of functions defined as
\begin{displaymath}
\mathfrak{M}_{p} (\omega; \mathbf{a}) := \left( \sum_{i=1}^{n} w_{i} a_{i}^{p} \right)^{\frac{1}{p}}, \ p \neq 0,
\end{displaymath}
and
\begin{displaymath}
\mathfrak{M}_{0} (\omega; \mathbf{a}) := \lim_{p \to 0} \mathfrak{M}_{p} (\omega; \mathbf{a}) = \prod_{i=1}^{n} a_{i}^{w_{i}}.
\end{displaymath}
Note that
\begin{displaymath}
\begin{split}
\mathfrak{M}_{\infty} (\omega; \mathbf{a}) & := \lim_{p \to \infty} \mathfrak{M}_{p} (\omega; \mathbf{a}) = \max \{ a_{1}, \dots, a_{n} \}, \\
\mathfrak{M}_{-\infty} (\omega; \mathbf{a}) & := \lim_{p \to -\infty} \mathfrak{M}_{p} (\omega; \mathbf{a}) = \min \{ a_{1}, \dots, a_{n} \}.
\end{split}
\end{displaymath}
One of the interesting properties for the generalized means is the monotonicity for exponents, that is,
\begin{equation} \label{E:M-mono}
\mathfrak{M}_{p} (\omega; \mathbf{a}) \leq \mathfrak{M}_{q} (\omega; \mathbf{a}) \ \ \textrm{if} \ p \leq q.
\end{equation}

Via the theory of power means of positive definite Hermitian matrices in \cite{LP}, the power means of positive invertible operators have been successfully defined and developed in \cite{LL14}. The power mean $P_{p}(\omega; \mathbf{A})$ of $\mathbf{A} = (A_{1}, \dots, A_{n}) \in \mathbb{P}^{n}$ for $p \in (0,1]$ is the unique solution $X \in \mathbb{P}$ of the nonlinear equation
\begin{displaymath}
X = \sum_{i=1}^{n} w_{i} X \#_{p} A_{i},
\end{displaymath}
and $P_{p}(\omega; \mathbf{A}) = P_{-p}(\omega; \mathbf{A}^{-1})^{-1}$ for $p \in [-1,0)$. It is the operator version of generalized mean $\mathfrak{M}_{p}$ of positive scalars, in other words, $P_{p}(\omega; \mathbf{A}) = \mathfrak{M}_{p} (\omega; \mathbf{A})$ if the $A_{i}$'s commute. The most interesting results shown in \cite{LL14} are that the power means converges to the Karcher mean under the strong operator topology, i.e.
\begin{displaymath}
\lim_{p \to 0} P_{p}(\omega; \mathbf{A}) = \Lambda(\omega; \mathbf{A}),
\end{displaymath}
where the Karcher mean $\Lambda(\omega; \mathbf{A})$ is the unique solution $X \in \mathbb{P}$ of the Karcher equation $\sum_{i=1}^{n} w_{i} \log X^{1/2} A_{i}^{-1} X^{1/2} = 0$, and for $0 < p \leq q \leq 1$
\begin{equation} \label{E:power}
\mathcal{H} = P_{-1} \leq \cdots \leq P_{-q} \leq P_{-p} \leq \cdots \leq P_{0} = \Lambda \leq \cdots \leq P_{p} \leq P_{q} \leq \cdots \leq P_{1} = \mathcal{A}.
\end{equation}

For two parameters $\mu, \nu > 0$ and $t \in [0,1]$, the generalized mean $\mathfrak{M}_{p} := \mathfrak{M}_{p} (1-t, t; \mu, \nu)$ with exponent $p \in \mathbb{R}$ have the following chain by \eqref{E:M-mono}
\begin{displaymath}
\mathfrak{M}_{-\infty} \leq \mathfrak{M}_{-q} \leq \mathfrak{M}_{-p} \leq \mathfrak{M}_{p} \leq \mathfrak{M}_{q} \leq \mathfrak{M}_{\infty}
\end{displaymath}
for $0 \leq p \leq q$. Thus, Proposition \ref{P:Paramean} (5) provides the the following chain of parameterized ordered means $G^{\mathfrak{M}_{p}} := G^{\mathfrak{M}_{p} (1-t, t; \mu, \nu)} (\omega; \mathbf{A})$ for $p \in \mathbb{R}$
\begin{equation} \label{E:paramean-mono}
G^{\mathfrak{M}_{-\infty}} \leq G^{\mathfrak{M}_{-q}} \leq G^{\mathfrak{M}_{-p}} \leq G^{\mathfrak{M}_{p}} \leq G^{\mathfrak{M}_{q}} \leq G^{\mathfrak{M}_{\infty}}.
\end{equation}

For fixed $p \in [-1,1]$ and $\mu, \nu > 0$, we consider the following two families of parameterized ordered means \begin{displaymath}
\{ G^{P_{p} (1-t, t; \mu, \nu)} (\omega; \mathbf{A}) \}_{t \in [0,1]}, \ \{ P_{p} (1-t, t; G^{\mu} (\omega; \mathbf{A}), G^{\nu} (\omega; \mathbf{A})) \}_{t \in [0,1]}.
\end{displaymath}
These are continuous curves in $\mathbb{P}$ connecting two parameterized ordered means $G^{\mu} (\omega; \mathbf{A})$ at $t = 0$ and $G^{\nu} (\omega; \mathbf{A})$ at $t = 1$.
\begin{remark}
Two families $\{ G^{P_{p} (1-t, t; \mu, \nu)} \}_{t \in [0,1]}, \ \{ P_{p} (1-t, t; G^{\mu}, G^{\nu} \}_{t \in [0,1]}$ are interpolating monotonically two parameterized ordered means $G^{\mu} (\omega; \mathbf{A})$ and $G^{\nu} (\omega; \mathbf{A})$ depending on the size of $\mu$ and $\nu$. Indeed, without loss of generality, assume $0 < \mu \leq \nu$. Then the generalized mean with fixed exponent $p \in [-1,1]$
\begin{displaymath}
P_{p} (1-t, t; \mu, \nu) =
\left\{
  \begin{array}{ll}
    \mu \left[ 1-t + t x^{p} \right]^{\frac{1}{p}}, & \hbox{$p \neq 0$;} \\
    \mu x^{t}, & \hbox{$p = 0$.}
  \end{array}
\right.
\end{displaymath}
for $x = \frac{\nu}{\mu} \geq 1$ is an increasing function on $t \in [0,1]$. So the family $\{ G^{P_{p} (1-t, t; \mu, \nu)} \}_{t \in [0,1]}$ is monotonically increasing on $t$ by the monotonicity of parameterized ordered means on parameters in Proposition \ref{P:Paramean} (5). Moreover, $G^{\mu} \leq G^{\nu}$ again by Proposition \ref{P:Paramean} (5), and
\begin{displaymath}
P_{p} (1-t, t; G^{\mu}, G^{\nu}) =
\left\{
  \begin{array}{ll}
    G^{\mu} \#_{\frac{1}{p}} \left[ (1-t) G^{\mu} + t G^{\mu} \#_{p} G^{\nu} \right], & \hbox{$p \in (0,1]$;} \\
    G^{\mu} \#_{t} G^{\nu}, & \hbox{$p = 0$;} \\
    G^{\mu} \#_{-\frac{1}{p}} \left[ (1-t) (G^{\mu})^{-1} + t (G^{\mu})^{-1} \#_{p} (G^{\nu})^{-1} \right]^{-1}, & \hbox{$p \in [-1,0)$.}
  \end{array}
\right.
\end{displaymath}
by \cite[Proposition 3.8]{LP}. So the family $\{ P_{p} (1-t, t; G^{\mu}, G^{\nu} \}_{t \in [0,1]}$ is also monotonically increasing on $t$.
\end{remark}

The following shows the relation between the above families of parameterized ordered means for $p = 1$.
\begin{theorem} \label{T:comparison}
Let $\mathbf{A} = (A_{1}, \dots, A_{n}) \in \mathbb{P}^{n}$ and $\omega = (w_{1}, \dots, w_{n}) \in \Delta_{n}$. Then for all $t \in [0,1]$
\begin{itemize}
\item[(i)] $(1-t) G^{\mu}(\omega; \mathbf{A}) + t G^{\nu}(\omega; \mathbf{A}) \leq G^{(1-t) \mu + t \nu}(\omega; \mathbf{A})$ for any $\mu, \nu \geq 0$, and
\item[(ii)] $(1-t) G^{\mu}(\omega; \mathbf{A}) + t G^{\nu}(\omega; \mathbf{A}) \geq G^{(1-t) \mu + t \nu}(\omega; \mathbf{A})$ for any $\mu, \nu < 0$.
\end{itemize}
\end{theorem}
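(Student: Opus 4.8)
The plan is to derive both inequalities from the concavity/convexity behaviour of the parameterized ordered means with respect to the parameter $\mu$, which in turn comes from the joint concavity (P3) of the underlying ordered mean applied to the shifted tuples. Specifically, for $\mu, \nu \geq 0$ write $\mathbf{A} + ((1-t)\mu + t\nu)\mathbf{I} = (1-t)(\mathbf{A} + \mu\mathbf{I}) + t(\mathbf{A} + \nu\mathbf{I})$, so that by (P3)
\begin{displaymath}
G(\omega; \mathbf{A} + ((1-t)\mu + t\nu)\mathbf{I}) \geq (1-t) G(\omega; \mathbf{A} + \mu\mathbf{I}) + t G(\omega; \mathbf{A} + \nu\mathbf{I}).
\end{displaymath}
Subtracting $((1-t)\mu + t\nu)I$ from both sides and rearranging the right-hand side as $(1-t)(G(\omega;\mathbf{A}+\mu\mathbf{I}) - \mu I) + t(G(\omega;\mathbf{A}+\nu\mathbf{I}) - \nu I)$ yields exactly statement (i). This is in fact a special case of Theorem \ref{T:Paramean-concave} with $n = 2$, $\omega = (1-t, t)$, $k$ replaced by $n$, $\mu_1 = \mu$, $\mu_2 = \nu$, and $\mathbb{A}^1 = \mathbb{A}^2 = \mathbf{A}$: then $\omega \bullet \mu = (1-t)\mu + t\nu$ and $\sum w_i \mathbb{A}^i = \mathbf{A}$, so the conclusion reads $(1-t)G^\mu(\omega;\mathbf{A}) + t G^\nu(\omega;\mathbf{A}) \leq G^{(1-t)\mu+t\nu}(\omega;\mathbf{A})$. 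So the cleanest writeup is simply to invoke Theorem \ref{T:Paramean-concave}.

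For part (ii), the strategy is to reduce to (i) via the duality \eqref{E:negative}, namely $G^\mu(\omega;\mathbf{A}) = G^{-\mu}(\omega;\mathbf{A}^{-1})^{-1}$ for $\mu < 0$. Given $\mu, \nu < 0$, set $\mu' = -\mu > 0$ and $\nu' = -\nu > 0$. Applying (i) to the positive parameters $\mu', \nu'$ and the tuple $\mathbf{A}^{-1}$ gives
\begin{displaymath}
(1-t) G^{\mu'}(\omega;\mathbf{A}^{-1}) + t G^{\nu'}(\omega;\mathbf{A}^{-1}) \leq G^{(1-t)\mu' + t\nu'}(\omega;\mathbf{A}^{-1}).
\end{displaymath}
Now invert both sides, using the order-reversing property of operator inversion, and then apply operator convexity of $X \mapsto X^{-1}$ on $\mathbb{P}$ to the left-hand side: $\left[(1-t)G^{\mu'}(\omega;\mathbf{A}^{-1}) + t G^{\nu'}(\omega;\mathbf{A}^{-1})\right]^{-1} \leq (1-t)G^{\mu'}(\omega;\mathbf{A}^{-1})^{-1} + t G^{\nu'}(\omega;\mathbf{A}^{-1})^{-1}$. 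Chaining these and translating back through \eqref{E:negative} (noting $(1-t)\mu' + t\nu' = -((1-t)\mu + t\nu)$ and $(1-t)\mu + t\nu < 0$) produces $G^{(1-t)\mu+t\nu}(\omega;\mathbf{A}) \leq (1-t)G^\mu(\omega;\mathbf{A}) + t G^\nu(\omega;\mathbf{A})$, which is (ii).

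The only subtlety — and the one step deserving care — is the direction in which the two facts (order-reversal of inversion and operator convexity of inversion) combine when passing from part (i) to part (ii): one must check that applying $X \mapsto X^{-1}$ to the inequality from (i) first flips the inequality, and that the subsequent use of operator convexity adds a term on the correct side so that the composite inequality still points the right way. I expect this bookkeeping to be the main (though modest) obstacle; everything else is a direct appeal to Theorem \ref{T:Paramean-concave} and the definition \eqref{E:negative}. It is worth remarking that (i) and (ii) together say that $\mu \mapsto G^\mu(\omega;\mathbf{A})$ is operator concave on $[0,\infty)$ and operator convex on $(-\infty, 0)$, consistent with the monotonicity chain in Proposition \ref{P:Paramean} (5).
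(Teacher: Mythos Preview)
Your proof is correct and matches the paper's approach: part (i) via Theorem \ref{T:Paramean-concave} (equivalently, joint concavity (P3) applied to the shifted tuples $\mathbf{A}+\mu\mathbf{I}$ and $\mathbf{A}+\nu\mathbf{I}$), and part (ii) via the duality \eqref{E:negative} combined with operator convexity and order-reversal of inversion, exactly as in the paper. Your explicit $2$-row specialization of Theorem \ref{T:Paramean-concave} is in fact a bit cleaner than the paper's terse invocation, but the underlying argument is identical.
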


\begin{proof}
Taking the $n$-by-$n$ block matrix
\begin{displaymath}
\mathbb{A} =
\left(
  \begin{array}{cccc}
    A_{1} & A_{2} & \cdots & A_{n} \\
    A_{1} & A_{2} & \cdots & A_{n} \\
    \vdots & \vdots & \ddots & \vdots \\
    A_{1} & A_{2} & \cdots & A_{n} \\
  \end{array}
\right)
\end{displaymath}
in Theorem \ref{T:Paramean-concave} and using the arithmetic-$G$ weighted mean inequality, we obtain (i).

For any $\mu, \nu < 0$,
\begin{displaymath}
\begin{split}
(1-t) G^{\mu}(\omega; \mathbf{A}) + t G^{\nu}(\omega; \mathbf{A}) & = (1-t) G^{-\mu}(\omega; \mathbf{A}^{-1})^{-1} + t G^{-\nu}(\omega; \mathbf{A}^{-1})^{-1} \\
& \geq \left[ (1-t) G^{-\mu}(\omega; \mathbf{A}^{-1}) + t G^{-\nu}(\omega; \mathbf{A}^{-1}) \right]^{-1} \\
& \geq G^{- (1-t) \mu - t \nu}(\omega; \mathbf{A}^{-1})^{-1} = G^{(1-t) \mu + t \nu}(\omega; \mathbf{A}).
\end{split}
\end{displaymath}
The first equality follows from \eqref{E:negative}, the first inequality from the convexity of inversion, and the second inequality from (i) and the order reversing of inversion.
\end{proof}

\begin{theorem} \label{T:power1}
Let $\mathbf{A} \in \mathbb{P}^{n}$, $\omega \in \Delta_{n}$ and $\mu, \nu > 0$. Then for all $t \in [0,1]$ and $p \in [-1,1)$
\begin{displaymath}
G^{P_{p} (1-t, t; \mu, \nu)} (\omega; \mathbf{A}) \geq P_{p} (1-t, t; G^{\frac{\mu}{K}} (\omega; \mathbf{A}), G^{\frac{\nu}{K}} (\omega; \mathbf{A})),
\end{displaymath}
where $\displaystyle K = \frac{(\mu + \nu)^{2}}{4 \mu \nu}$.
\end{theorem}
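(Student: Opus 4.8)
The plan is to sandwich the right-hand side between the two-variable arithmetic mean (which is easy to handle) and then transfer that arithmetic mean into the parameter via Theorem~\ref{T:comparison}, paying for the transfer with a scalar Kantorovich estimate that is exactly absorbed by the factor $1/K$ sitting in front of the parameters $\mu,\nu$ on the right.

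Concretely, I would first use the monotonicity of power means in the exponent (the chain~\eqref{E:power}, applied to the two-variable weighted power mean with weight $(1-t,t)$): since $p<1$,
\begin{displaymath}
P_{p}(1-t,t; G^{\mu/K}(\omega;\mathbf{A}), G^{\nu/K}(\omega;\mathbf{A})) \le P_{1}(1-t,t; G^{\mu/K}(\omega;\mathbf{A}), G^{\nu/K}(\omega;\mathbf{A})) = (1-t)\,G^{\mu/K}(\omega;\mathbf{A}) + t\,G^{\nu/K}(\omega;\mathbf{A}).
\end{displaymath}
Next, since $\mu/K,\nu/K\ge 0$, Theorem~\ref{T:comparison}(i) (with parameters $\mu/K$ and $\nu/K$) gives
\begin{displaymath}
(1-t)\,G^{\mu/K}(\omega;\mathbf{A}) + t\,G^{\nu/K}(\omega;\mathbf{A}) \le G^{((1-t)\mu + t\nu)/K}(\omega;\mathbf{A}).
\end{displaymath}
Finally I would invoke the monotonicity of $G^{\bullet}(\omega;\mathbf{A})$ on the parameter (Proposition~\ref{P:Paramean}(5)): it then suffices to verify the \emph{scalar} inequality
\begin{displaymath}
\frac{(1-t)\mu + t\nu}{K} \le P_{p}(1-t,t;\mu,\nu),
\end{displaymath}
since this yields $G^{((1-t)\mu+t\nu)/K}(\omega;\mathbf{A}) \le G^{P_{p}(1-t,t;\mu,\nu)}(\omega;\mathbf{A})$, and the three estimates chain together to give the theorem.

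To prove the scalar inequality, note that with $M=\max\{\mu,\nu\}$, $m=\min\{\mu,\nu\}$ one has $\tfrac{(M+m)^{2}}{4Mm}=\tfrac{(\mu+\nu)^{2}}{4\mu\nu}=K$, so the classical Kantorovich (reverse arithmetic--harmonic weighted mean) inequality applied to $\mu,\nu$ with weights $(1-t,t)$ gives $(1-t)\mu + t\nu \le K\,\big[(1-t)\mu^{-1} + t\nu^{-1}\big]^{-1} = K\,\mathfrak{M}_{-1}(1-t,t;\mu,\nu)$; since $p\ge -1$, the monotonicity of the H\"older means in the exponent~\eqref{E:M-mono} gives $\mathfrak{M}_{-1}(1-t,t;\mu,\nu) \le \mathfrak{M}_{p}(1-t,t;\mu,\nu) = P_{p}(1-t,t;\mu,\nu)$ (the equality because $\mu,\nu$ commute), and dividing by $K>0$ yields the claim. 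The degenerate case $\mu=\nu$ is trivial since then $K=1$ and both sides of the theorem equal $G^{\mu}(\omega;\mathbf{A})$.

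The only genuinely non-routine point — the rest being a direct chaining of results already in the paper — is recognizing that the loss incurred in passing from $P_{p}$ up to $P_{1}$ and then into the parameter (via Theorem~\ref{T:comparison}) is precisely the Kantorovich factor $K=\tfrac{(\mu+\nu)^{2}}{4\mu\nu}$, so that deflating the parameters from $\mu,\nu$ to $\mu/K,\nu/K$ on the right compensates it exactly; this is why the constant must take that specific form. I do not anticipate a real obstacle beyond taking care that Theorem~\ref{T:comparison}(i) and Proposition~\ref{P:Paramean}(5) are invoked with the correct nonnegative parameters $\mu/K,\nu/K$ and $((1-t)\mu+t\nu)/K$.
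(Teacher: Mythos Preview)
Your proof is correct and follows essentially the same approach as the paper: both combine the scalar Kantorovich estimate $(1-t)\mu+t\nu\le K\,P_{p}(1-t,t;\mu,\nu)$ (via the harmonic mean and~\eqref{E:M-mono}) with parameter monotonicity (Proposition~\ref{P:Paramean}(5)), Theorem~\ref{T:comparison}(i), and the power-mean chain~\eqref{E:power}. The only cosmetic difference is that the paper works from the left-hand side and uses the homogeneity $G^{((1-t)\mu+t\nu)/K}(\omega;\mathbf{A})=\tfrac{1}{K}G^{(1-t)\mu+t\nu}(\omega;K\mathbf{A})$ before invoking Theorem~\ref{T:comparison} with parameters $\mu,\nu$, whereas you apply Theorem~\ref{T:comparison} directly with parameters $\mu/K,\nu/K$; your route is marginally more direct since it bypasses the rescaling of $\mathbf{A}$.
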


\begin{proof}
For two parameters $\mu, \nu > 0$,
\begin{displaymath}
\begin{split}
G^{P_{p} (1-t, t; \mu, \nu)} (\omega; \mathbf{A}) & \geq G^{\frac{(1-t) \mu + t \nu}{K}} (\omega; \mathbf{A}) = \frac{1}{K} G^{(1-t) \mu + t \nu} (\omega; K \mathbf{A}) \\
& \geq \frac{1}{K} \left[ (1-t) G^{\mu} (\omega; K \mathbf{A}) + t G^{\nu} (\omega; K \mathbf{A}) \right] \\
& \geq \frac{1}{K} P_{p} (1-t, t; G^{\mu} (\omega; K \mathbf{A}), G^{\nu} (\omega; K \mathbf{A})) \\
& = P_{p} (1-t, t; G^{\frac{\mu}{K}} (\omega; \mathbf{A}), G^{\frac{\nu}{K}} (\omega; \mathbf{A})).
\end{split}
\end{displaymath}
The first inequality follows from Proposition \ref{P:Paramean} (5) with
\begin{displaymath}
(1-t) \mu + t \nu \leq K [ (1-t) \mu^{-1} + t \nu^{-1} ]^{-1} \leq K P_{p} (1-t, t; \mu, \nu)
\end{displaymath}
for $t \in [0,1]$, the first equality from the homogeneity in Proposition \ref{P:Paramean} (1), the second inequality from Theorem \ref{T:comparison} (i), the third inequality from the arithmetic-power mean inequality in \eqref{E:power}, and the last equality from the homogeneities of power mean and parameterized ordered mean, respectively, in \cite[Proposition 3.6]{LL14} and Proposition \ref{P:Paramean} (1).
\end{proof}

\begin{remark}
Theorem \ref{T:power1} shows the order relation between two families
\begin{center}
$\{ G^{P_{p} (1-t, t; \mu, \nu)} (\omega; \mathbf{A}) \}_{t \in [0,1]}$ \ and \ $\{ P_{p} (1-t, t; G^{\frac{\mu}{K}} (\omega; \mathbf{A}), G^{\frac{\nu}{K}} (\omega; \mathbf{A})) \}_{t \in [0,1]}$
\end{center}
for $p \in [-1,1)$. Note that
\begin{displaymath}
P_{p} (1-t, t; G^{\mu} (\omega; \mathbf{A}), G^{\nu} (\omega; \mathbf{A})) \geq P_{p} (1-t, t; G^{\frac{\mu}{K}} (\omega; \mathbf{A}), G^{\frac{\nu}{K}} (\omega; \mathbf{A})),
\end{displaymath}
since $G^{\mu} \geq G^{\frac{\mu}{K}}$ and $G^{\nu} \geq G^{\frac{\nu}{K}}$ by the monotonicity of parameterized ordered means for parameters in Proposition \ref{P:Paramean} (5) and the power mean $P_{p}$ is monotonic on variables. So it is open to compare $\{ G^{P_{p} (1-t, t; \mu, \nu)} (\omega; \mathbf{A}) \}_{t \in [0,1]}$ and $\{ P_{p} (1-t, t; G^{\mu} (\omega; \mathbf{A}), G^{\nu} (\omega; \mathbf{A})) \}_{t \in [0,1]}$.
\end{remark}

Theorem \ref{T:power1} can be extended to the multivariable power means, and its proof follows similarly to the proof of Theorem \ref{T:power1} for the multivariable power means.
\begin{proposition}
Let $\mathbf{A} \in \mathbb{P}^{n}$, $\omega \in \Delta_{n}$, and $\lambda \in \Delta_{k}$. Then for all $\mu_{1}, \dots, \mu_{k} > 0$ and $p \in [-1,1)$
\begin{displaymath}
G^{P_{p} (\lambda; \mu_{1}, \dots, \mu_{k})} (\omega; \mathbf{A}) \geq P_{p} (\lambda; G^{\frac{\mu_{1}}{K}} (\omega; \mathbf{A}), \dots, G^{\frac{\mu_{k}}{K}} (\omega; \mathbf{A})),
\end{displaymath}
where $\displaystyle K = \frac{(\mu_{\max} + \mu_{\min})^{2}}{4 \mu_{\max} \mu_{\min}}$ for $\mu_{\max} = \max \{ \mu_{1}, \dots, \mu_{k} \}$ and $\mu_{\min} = \min \{ \mu_{1}, \dots, \mu_{k} \}$.
\end{proposition}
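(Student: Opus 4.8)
The plan is to run the same chain of inequalities as in the proof of Theorem~\ref{T:power1}, with the two-parameter data $(1-t,t;\mu,\nu)$ replaced by the $k$-tuples $\lambda=(\lambda_1,\dots,\lambda_k)$ and $(\mu_1,\dots,\mu_k)$; throughout write $\lambda\bullet\mu:=\sum_{j=1}^{k}\lambda_j\mu_j$ and $K=\frac{(\mu_{\max}+\mu_{\min})^2}{4\mu_{\max}\mu_{\min}}$. First I would record the multivariate form of Theorem~\ref{T:comparison}~(i): for all $\mu_1,\dots,\mu_k\ge 0$ and all $a>0$,
\[
\sum_{j=1}^{k}\lambda_j\,G^{\mu_j}(\omega;a\mathbf{A}) \;\le\; G^{\lambda\bullet\mu}(\omega;a\mathbf{A}),
\]
which follows by applying Theorem~\ref{T:Paramean-concave} to the $k$-by-$n$ block matrix all of whose $k$ rows equal $a\mathbf{A}=(aA_1,\dots,aA_n)$ (outer weights $\lambda$, inner weights $\omega$, parameters $\mu_1,\dots,\mu_k$), using $\sum_j\lambda_j\,(a\mathbf{A})=a\mathbf{A}$ since $\lambda\in\Delta_k$.

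Next I would establish the scalar estimate
\[
\lambda\bullet\mu \;=\; \mathcal{A}(\lambda;\mu_1,\dots,\mu_k) \;\le\; K\,\mathcal{H}(\lambda;\mu_1,\dots,\mu_k) \;\le\; K\,P_p(\lambda;\mu_1,\dots,\mu_k),
\]
equivalently $\tfrac{1}{K}(\lambda\bullet\mu)\le P_p(\lambda;\mu_1,\dots,\mu_k)$. The first inequality is the weighted Kantorovich (arithmetic--harmonic) inequality for the positive scalars $\mu_1,\dots,\mu_k\in[\mu_{\min},\mu_{\max}]$, i.e. the scalar instance of \eqref{E:reverse-A-G} with $G=\mathcal{H}$, $M=\mu_{\max}$, $m=\mu_{\min}$; the second holds because $P_p(\lambda;\mu_1,\dots,\mu_k)=\mathfrak{M}_p(\lambda;\mu_1,\dots,\mu_k)$ (the $\mu_j$'s commute) and $\mathfrak{M}_p\ge\mathfrak{M}_{-1}=\mathcal{H}(\lambda;\mu_1,\dots,\mu_k)$ by the monotonicity \eqref{E:M-mono} of generalized means, since $p\ge-1$.

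Then I would assemble the chain exactly as in Theorem~\ref{T:power1}. By the monotonicity on parameters (Proposition~\ref{P:Paramean}~(5)) and the scalar estimate, followed by the homogeneity (Proposition~\ref{P:Paramean}~(1)) with scaling factor $K$,
\[
G^{P_p(\lambda;\mu_1,\dots,\mu_k)}(\omega;\mathbf{A}) \;\ge\; G^{\frac{\lambda\bullet\mu}{K}}(\omega;\mathbf{A}) \;=\; \frac{1}{K}\,G^{\lambda\bullet\mu}(\omega;K\mathbf{A}).
\]
Applying the multivariate comparison of the first step with $a=K$, then the arithmetic--power-mean inequality $P_p\le\mathcal{A}=P_1$ from \eqref{E:power}, gives
\[
\frac{1}{K}\,G^{\lambda\bullet\mu}(\omega;K\mathbf{A}) \;\ge\; \frac{1}{K}\sum_{j=1}^{k}\lambda_j\,G^{\mu_j}(\omega;K\mathbf{A}) \;\ge\; \frac{1}{K}\,P_p\big(\lambda;G^{\mu_1}(\omega;K\mathbf{A}),\dots,G^{\mu_k}(\omega;K\mathbf{A})\big),
\]
and the homogeneity of the power mean \cite[Proposition~3.6]{LL14} together with $\frac{1}{K}G^{\mu_j}(\omega;K\mathbf{A})=G^{\mu_j/K}(\omega;\mathbf{A})$ (Proposition~\ref{P:Paramean}~(1)) rewrites the right-hand side as $P_p\big(\lambda;G^{\mu_1/K}(\omega;\mathbf{A}),\dots,G^{\mu_k/K}(\omega;\mathbf{A})\big)$, which is the claim.

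The argument is essentially bookkeeping once the two preliminary observations are in place, so I do not expect a genuine obstacle; the only points needing care are that the range $p\in[-1,1)$ is precisely what makes \emph{both} $P_p\ge\mathfrak{M}_{-1}$ and $P_p\le\mathcal{A}$ available, and that the rescaling by $K$ is applied consistently to the parameter \emph{and} to the operator variables at each invocation of homogeneity, exactly as in the proof of Theorem~\ref{T:power1}.
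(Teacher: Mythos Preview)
Your proof is correct and is exactly the argument the paper has in mind: the paper gives no separate proof, stating only that it ``follows similarly to the proof of Theorem~\ref{T:power1} for the multivariable power means,'' and your chain is precisely the $k$-variable transcription of that proof, with the multivariate concavity step obtained directly from Theorem~\ref{T:Paramean-concave} (just as Theorem~\ref{T:comparison}~(i) was in the two-variable case).
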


The following shows the relation between two families of parameterized ordered means associated with power means for parameters $p$ and $q$ with $-1 \leq p \leq 1 \leq q$.
\begin{theorem} \label{T:power2}
Let $\mathbf{A} \in \mathbb{P}^{n}$, $\omega \in \Delta_{n}$, and $\lambda = (\lambda_{1}, \dots, \lambda_{k}) \in \Delta_{k}$. Then for all $\mu_{1}, \dots, \mu_{k} > 0$ and $-1 \leq p \leq 1 \leq q$,
\begin{displaymath}
G^{P_{q} (\lambda; \mu_{1}, \dots, \mu_{k})} (\omega; \mathbf{A}) \geq P_{p} (\lambda; G^{\mu_{1}} (\omega; \mathbf{A}), \dots, G^{\mu_{k}} (\omega; \mathbf{A})).
\end{displaymath}
\end{theorem}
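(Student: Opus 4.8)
The plan is to sandwich both sides between the arithmetic mean, making essential use of the hypothesis $-1\le p\le 1\le q$: since the left-hand exponent $q$ is at least $1$, the scalar power mean $P_q$ dominates the arithmetic mean of the $\mu_j$, while on the right the operator power mean $P_p$ with $p\le 1$ is dominated by the arithmetic mean of the $G^{\mu_j}(\omega;\mathbf{A})$. The bridge between the two arithmetic means is the multivariate concavity of $G^{(\cdot)}$ in the parameter, that is, the $k$-variable form of Theorem \ref{T:comparison}(i).

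Concretely, I would proceed in three steps. First, because $\mu_1,\dots,\mu_k$ are positive scalars, $P_q(\lambda;\mu_1,\dots,\mu_k)=\mathfrak{M}_q(\lambda;\mu_1,\dots,\mu_k)$, and the monotonicity of generalized means on exponents \eqref{E:M-mono} gives, for $q\ge 1$,
\[
P_q(\lambda;\mu_1,\dots,\mu_k)\ \ge\ \mathfrak{M}_1(\lambda;\mu_1,\dots,\mu_k)=\sum_{j=1}^k\lambda_j\mu_j .
\]
Hence the monotonicity of parameterized ordered means on parameters, Proposition \ref{P:Paramean}(5), yields $G^{P_q(\lambda;\mu_1,\dots,\mu_k)}(\omega;\mathbf{A})\ge G^{\sum_j\lambda_j\mu_j}(\omega;\mathbf{A})$. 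Second, applying Theorem \ref{T:Paramean-concave} to the block matrix each of whose rows equals $\mathbf{A}=(A_1,\dots,A_n)$---exactly as in the proof of Theorem \ref{T:comparison}(i), but now with $k$ rows and outer weight $\lambda\in\Delta_k$---and using $\sum_{j=1}^k\lambda_j\mathbf{A}=\mathbf{A}$, one gets the multivariate parameter-concavity inequality
\[
G^{\sum_j\lambda_j\mu_j}(\omega;\mathbf{A})\ \ge\ \sum_{j=1}^k\lambda_j\,G^{\mu_j}(\omega;\mathbf{A}).
\]
Third, the right-hand side of this last inequality is precisely $P_1(\lambda;G^{\mu_1}(\omega;\mathbf{A}),\dots,G^{\mu_k}(\omega;\mathbf{A}))$, so the arithmetic--power mean inequality in \eqref{E:power}, valid since $p\le 1$, gives
\[
P_1(\lambda;G^{\mu_1}(\omega;\mathbf{A}),\dots,G^{\mu_k}(\omega;\mathbf{A}))\ \ge\ P_p(\lambda;G^{\mu_1}(\omega;\mathbf{A}),\dots,G^{\mu_k}(\omega;\mathbf{A})).
\]
Chaining the three displays finishes the argument.

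There is no genuine obstacle here; the only point deserving attention is the asymmetric role of the exponents. On the left, $P_q$ is applied to positive reals, so it is simply the H\"older mean $\mathfrak{M}_q$ and makes sense for every $q\in\mathbb{R}$, in particular for $q\ge 1$; on the right, $P_p$ is the operator power mean, defined only for $p\in[-1,1]$, which is exactly why the hypothesis reads $-1\le p\le 1\le q$. In contrast with Theorem \ref{T:power1}, no Kantorovich-type correction of the parameters is needed, because the chain $P_q\ge\mathcal{A}\ge P_p$ already runs in the direction dictated by the desired inequality.
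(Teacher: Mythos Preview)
Your proof is correct and matches the paper's argument essentially step for step: the paper writes the chain $G^{P_q(\lambda;\mu)}\ge G^{\lambda\bullet\mu}\ge\sum_i\lambda_iG^{\mu_i}\ge P_p(\lambda;G^{\mu_1},\dots,G^{\mu_k})$, justifying the three inequalities by Proposition~\ref{P:Paramean}(5) with \eqref{E:M-mono}, Theorem~\ref{T:Paramean-concave}, and \eqref{E:power}, exactly as you do. Your explicit description of the block matrix used to extract the middle inequality from Theorem~\ref{T:Paramean-concave} is a minor elaboration but not a different idea.
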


\begin{proof}
Let $\mu = (\mu_{1}, \dots, \mu_{k}) \in \mathbb{R}^{k}$ with positive components. Note that $P_{q} (\lambda; \mu_{1}, \dots, \mu_{k}) = \mathfrak{M}_{q} (\lambda; \mu_{1}, \dots, \mu_{k})$ for any $q \geq 1$. Then
\begin{displaymath}
G^{P_{q} (\lambda; \mu)} (\omega; \mathbf{A}) \geq G^{\lambda \bullet \mu} (\omega; \mathbf{A})
\geq \sum_{i=1}^{k} \lambda_{i} G^{\mu_{i}} (\omega; \mathbf{A})
\geq P_{p} (\lambda; G^{\mu_{1}} (\omega; \mathbf{A}), \dots, G^{\mu_{k}} (\omega; \mathbf{A})),
\end{displaymath}
where $\lambda \bullet \mu$ denotes the Euclidean inner product of $\lambda$ and $\mu$, alternatively the weighted arithmetic mean of $\mu$ with probability vector $\lambda$. The first inequality follows from the monotonicity on parameters in Proposition \ref{P:Paramean} (5) with \eqref{E:M-mono}, the second from Theorem \ref{T:Paramean-concave}, and the third from the monotonicity of power means in \eqref{E:power}.
\end{proof}

\begin{remark}
By \eqref{E:power}, Theorem \ref{T:power2}, and \eqref{E:paramean-mono}, we obtain a new chain of inequalities
\begin{displaymath}
\Lambda(\lambda; G^{\mu_{1}}, \dots, G^{\mu_{k}}) \leq P_{\frac{1}{q}}(\lambda; G^{\mu_{1}}, \dots, G^{\mu_{k}}) \leq P_{\frac{1}{p}}(\lambda; G^{\mu_{1}}, \dots, G^{\mu_{k}}) \leq G^{P_{p}(\lambda; \mu)} \leq G^{P_{q}(\lambda; \mu)}
\end{displaymath}
for $1 \leq p \leq q$, where $G^{\nu} := G^{\nu}(\omega; \mathbf{A})$ for any parameter $\nu \geq 0$. For negative parameters the reverse inequalities in the above chain hold, but it remains to show
\begin{displaymath}
P_{-\frac{1}{p}}(\lambda; G^{\mu_{1}}, \dots, G^{\mu_{k}}) \geq G^{P_{-p}(\lambda; \mu)}
\end{displaymath}
for $p \geq 1$.
\end{remark}

\section{Final remarks and open problems}

The generalized mean $\mathfrak{M}_{p}$ of positive real numbers can be naturally defined for positive definite operators $A_{1}, \dots, A_{n}$ such as
\begin{displaymath}
\mathfrak{M}_{p} (\omega; \mathbf{A}) := \left( \sum_{i=1}^{n} w_{i} A_{i}^{p} \right)^{\frac{1}{p}}, \ p \neq 0.
\end{displaymath}
It holds many properties same for positive real numbers, but there are some different ones. For instance,
\begin{displaymath}
\mathfrak{M}_{0} (\omega; \mathbf{A}) := \lim_{p \to 0} \mathfrak{M}_{p} (\omega; \mathbf{A}) = \exp \left( \sum_{i=1}^{n} w_{i} \log A_{i} \right),
\end{displaymath}
where the right hand side is known as the log-Euclidean mean, and
\begin{displaymath}
\mathfrak{M}_{-q} \leq \mathfrak{M}_{-p} \leq \mathfrak{M}_{-1} = \mathcal{H} \leq \mathfrak{M}_{1} = \mathcal{A} \leq \mathfrak{M}_{p} \leq \mathfrak{M}_{q}
\end{displaymath}
if $1 \leq p \leq q$. One can find more information from \cite{Kim18} by taking the finitely supported measure $\sum_{i=1}^{n} w_{i} \delta_{A_{i}}$, where $\delta_{A}$ is the point measure at $A \in \mathbb{P}$.

Similarly to Section 4, we can consider the following two families of parameterized ordered means
\begin{displaymath}
\{ G^{\mathfrak{M}_{p} (1-t, t; \mu, \nu)} (\omega; \mathbf{A}) \}, \ \{ \mathfrak{M}_{p} (1-t, t; G^{\mu} (\omega; \mathbf{A}), G^{\nu} (\omega; \mathbf{A})) \}
\end{displaymath}
for fixed $p \in \mathbb{R}$ and $\mu, \nu > 0$. By Theorem \ref{T:comparison} (i) and Theorem \ref{T:power1} with $p = -1$ we obtain
\begin{displaymath}
\begin{split}
G^{\mathcal{A}(1-t, t; \mu, \nu)}(\omega; \mathbf{A}) & \geq \mathcal{A}(1-t, t; G^{\mu}(\omega; \mathbf{A}), G^{\nu}(\omega; \mathbf{A})) \\
G^{\mathcal{H}(1-t, t; \mu, \nu)} (\omega; \mathbf{A}) & \geq \mathcal{H}(1-t, t; G^{\frac{\mu}{K}} (\omega; \mathbf{A}), G^{\frac{\nu}{K}} (\omega; \mathbf{A})).
\end{split}
\end{displaymath}
Note that it may not hold for $p \in (-1,1)$, since the monotonicity of generalized means including the log-Euclidean mean does not hold. So it is an interesting question to compare two families for general $p \in \mathbb{R}$.

At last, we explain the background to extend a multivariate mean to a barycenter of probability measures and give some open problems from the results in this paper. Let $B(X)$ be the algebra of Borel sets on a metric space $(X, d)$. Let $\mathcal{P}(X)$ be the set of all probability measures on $(X, B(X))$ with separable support, and let $\mathcal{P}^{p}(X) \subset \mathcal{P}(X)$ for $p \geq 1$ be the set of all probability measures with finite $p$-moment: for some $y \in X$
\begin{displaymath}
\int_{X} d^{p}(x,y) d\mu(x) < \infty.
\end{displaymath}
We denote as $\mathcal{P}^{\infty}(X)$ the set of probability measures on $(X, B(X))$ with compact support. For $p \geq 1$ the $p$-Wasserstein distance on $\mathcal{P}^{p}(X)$ is defined by
\begin{displaymath}
d_{p}^{W} (\rho, \sigma) := \left[ \underset{\pi \in \Pi(\rho, \sigma)}{\inf} \int_{X \times X} d^{p}(x,y) d\pi(x, y) \right]^{1/p},
\end{displaymath}
where $\Pi(\rho, \sigma)$ denotes a set of all couplings for $\rho, \sigma \in \mathcal{P}^{p}(X)$. Moreover, the $\infty$-Wasserstein distance on $\mathcal{P}^{\infty}(X)$ is given by
\begin{displaymath}
d_{\infty}^{W} (\rho, \sigma) = \lim_{p \to \infty} d_{p}^{W} (\rho, \sigma) = \underset{\pi \in \Pi(\rho, \sigma)}{\inf} \sup \{ d(x,y) : (x,y) \in \textrm{supp}(\pi) \}.
\end{displaymath}
For more details and information, see \cite{Vi03,Vi08}.

For each natural number $n$, in general, a mean $G_{n}$ on a set $X$ is a map $G_{n}: X^{n} \to X$ satisfying the idempotency. An intrinsic mean $G_{n}$ is the mean with invariance under permutation and repetition. From Proposition 2.7 in \cite{LL17}, a non-expansive intrinsic mean $G = \{ G_{n} \}_{n \in \mathbb{N}}$ on a complete metric space $X$ uniquely extends to a $d_{\infty}^{W}$-contractive barycentric map $\beta_{G}: \mathcal{P}^{\infty}(X) \to X$, where $\beta_{G}$ is $d_{\infty}^{W}$-contractive if and only if
\begin{displaymath}
d(\beta_{G}(\rho), \beta_{G}(\sigma)) \leq d_{\infty}^{W} (\rho, \sigma)
\end{displaymath}
for any $\rho, \sigma \in \mathcal{P}^{\infty}(X)$. Thus, the parameterized ordered mean $G^{\mu} = \{ G_{n}^{\mu} \}$ with invariance under permutation and repetition can be extended to a $d_{\infty}^{W}$-contractive barycentric map $\beta_{G^{\mu}}$ by Proposition \ref{P:Paramean} (6). It is also an interesting problem to generalize results in Section 3 and Section 4 to the $d_{\infty}^{W}$-contractive barycentric map $\beta_{G^{\mu}}$.

\vskip 4mm

\textbf{Acknowledgement}

This work was supported by the National Research Foundation of Korea (NRF) grant funded by the Korea government (MSIT) (No. NRF-2018R1C1B6001394).


\begin{thebibliography}{99}

\bibitem{ALM}
T. Ando, C.-K. Li, and R. Mathias, Geometric means, Linear Algebra Appl. \textbf{385} (2004), 305-334.

\bibitem{BMW}
H. H. Bauschke, S. M. Moffat, and X. Wang, The resolvent average for positive semidefinite matrices, Linear Algbera Appl. \textbf{432} (2010), 1757-1771.

\bibitem{Bh}
R. Bhatia, Positive Definite Matrices, Princeton Series in Applied Mathematics, Princeton University Press, Princeton, NJ, 2007.

\bibitem{BMP}
D. Bini, B. Meini, and F. Poloni, An effective matrix geometric mean satisfying the Ando–Li–Mathias properties, Math. Comp. \textbf{79} (2010), 437-452.

\bibitem{HK}
J. Hwang and S. Kim, Lie-Trotter means of positive definite operators, Linear Algebra Appl. \textbf{531} (2017), 268-280.

\bibitem{KMS}
M. Kian, M. S. Moslehian and Y. Seo, Variants of Ando-Hiai type inequalities for deformed means and applications, arXiv:1911.10733v1.

\bibitem{Kim18}
S. Kim, The quasi-arithmetic means and Cartan barycenters of compactly supported measures, Forum Math. \textbf{30}, No. 3 (2018), 753-765.

\bibitem{KLL}
S. Kim, J. Lawson, and Y. Lim, The matrix geometric mean of parameterized, weighted arithmetic and harmonic means, Linear Algebra Appl. \textbf{435} (2011), 2114-2131.

\bibitem{KA}
F. Kubo and T. Ando, Means of positive linear operators, Math. Ann. \textbf{246} (1979/80), no. 3, 205-224.

\bibitem{KumL}
S. Kum and Y. Lim, Nonexpansiveness of the resolvent average, J. Math. Anal. Appl. \textbf{432} (2015), 918-927.

\bibitem{LL14}
J. Lawson and Y. Lim, Karcher means and Karcher equations of positive definite operators, Trans. Amer. Math. Soc. Series B, Vol. 1 (2014), 1-22.

\bibitem{LL17}
J. Lawson and Y. Lim, Contractive barycentric maps, J. Operator Theory \textbf{77} (2017), 87-107.

\bibitem{Lim}
Y. Lim, Contractive barycentric maps and $L^{1}$ ergodic theorems on the cone of positive definite matrices, J. Math. Anal. Appl. \textbf{459} (2018), 291-306.

\bibitem{LP}
Y. Lim and M. Palfia, The matrix power means and the Karcher mean, J. Func. Anal. \textbf{262:4} (2012), 1498-1514.

\bibitem{Vi03}
C. Villani, Topics in Optimal Transportation, Graduate Studies in Mathematics, Vol. \textbf{58}, Springer, Berlin, 2003.

\bibitem{Vi08}
C. Villani, Optimal Transport: Old and New, Lecture Notes, Ecole d'\'{e}t\'{e} de probabilit\'{e}s de Saint-Flour, Springer, New York, 2008.

\end{thebibliography}
\end{document}